\numberwithin{equation}{section}
\theoremstyle{plain}
\newtheorem{theorem}{Theorem}[section]
\newtheorem{proposition}[theorem]{Proposition}
\newtheorem{lemma}[theorem]{Lemma}
\theoremstyle{remark}
\newtheorem{example}[theorem]{Example}
\theoremstyle{definition}
\newcommand{\doverline}[1]{\overline{\dbl@overline{#1}}}
\newcommand{\dbl@overline}[1]{\mathpalette\dbl@@overline{#1}}
\newcommand{\dbl@@overline}[2]{%
  \begingroup
  \sbox\z@{$\m@th#1\overline{#2}$}%
  \ht\z@=\dimexpr\ht\z@-2\dbl@adjust{#1}\relax
  \box\z@
  \ifx#1\scriptstyle\kern-\scriptspace\else
  \ifx#1\scriptscriptstyle\kern-\scriptspace\fi\fi
  \endgroup
}
\newcommand{\dunderline}[1]{\@@underline{\dbl@underline{#1}}}
\newcommand{\dbl@underline}[1]{\mathpalette\dbl@@underline{#1}}
\newcommand{\dbl@@underline}[2]{%
  \begingroup
  \sbox\z@{$\m@th#1\@@underline{#2}$}%
  \dp\z@=\dimexpr\dp\z@-2\dbl@adjust{#1}\relax
  \box\z@
  \ifx#1\scriptstyle\kern-\scriptspace\else
  \ifx#1\scriptscriptstyle\kern-\scriptspace\fi\fi
  \endgroup
}
\newcommand{\dbl@adjust}[1]{%
  \fontdimen8
  \ifx#1\displaystyle\textfont\else
  \ifx#1\textstyle\textfont\else
  \ifx#1\scriptstyle\scriptfont\else
  \scriptscriptfont\fi\fi\fi 3
}
\newcommand{\II}{\mathcal{I}}
\newcommand{\JJ}{\mathcal{J}}
\newcommand{\LL}{\mathcal{L}}
\newcommand{\R}{\mathbb{R}}
\newcommand{\N}{\mathbb{N}}
\newcommand{\hhh}{\mathtt{h}}
\newcommand{\iii}{\mathtt{i}}
\newcommand{\jjj}{\mathtt{j}}
\newcommand{\kkk}{\mathtt{k}}
\renewcommand{\lll}{\mathtt{l}}
\newcommand{\ppp}{\mathtt{p}}
\newcommand{\qqq}{\mathtt{q}}
\newcommand{\eps}{\varepsilon}
\newcommand{\fii}{\varphi}
\newcommand{\roo}{\varrho}
\newcommand{\ualpha}{\doverline{\alpha}}
\newcommand{\lalpha}{\dunderline{\alpha}}
\newcommand{\A}{\mathsf{A}}
\newcommand{\dd}{\,\mathrm{d}}
\renewcommand{\ge}{\geqslant}
\renewcommand{\le}{\leqslant}
\renewcommand{\geq}{\geqslant}
\renewcommand{\leq}{\leqslant}
\DeclareMathOperator{\dimm}{dim_M}
\DeclareMathOperator{\udimm}{\overline{dim}_M}
\DeclareMathOperator{\dimh}{dim_H}
\DeclareMathOperator{\dimaff}{dim_{aff}}
\DeclareMathOperator{\GL}{GL}
\DeclareMathOperator{\diam}{diam}
\DeclareMathOperator{\diag}{diag}
\DeclareMathOperator{\proj}{proj}
\DeclareMathOperator{\conv}{conv}
\renewcommand{\atop}[2]{\genfrac{}{}{0pt}{}{#1}{#2}}
\begin{document}

\title{Self-affine sponges with random contractions}

\author{Bal\'azs B\'ar\'any}
\address[Bal\'azs B\'ar\'any]
        {Department of Stochastics \\ 
        HUN-REN–BME Stochastics Research Group \\ 
        Institute of Mathematics \\  
        Budapest University of Technology and Economics \\ 
        M\H{u}egyetem rkp. 3., H-1111 Budapest, Hungary}
\email{barany.balazs@ttk.bme.hu}

\author{Antti K\"aenm\"aki}
\address[Antti K\"aenm\"aki]
        {University of Eastern Finland \\
         Department of Physics and Mathematics \\
         P.O.\ Box 111 \\
         FI-80101 Joensuu \\
         Finland}
\email{antti@kaenmaki.net}

\author{Micha\l{} Rams}
\address[Michal Rams]
        {Institute of Mathematics \\
         Polish Academy of Sciences \\
         S\'niadeckich 8 \\
         00-656 Warsaw \\
         Poland}
\email{rams@impan.pl}

\thanks{B.~B\'ar\'any was supported by the grants NKFI K142169, and the grant NKFI KKP144059 ``Fractal geometry and applications''. M.~Rams was supported by National Science Centre grant 2019/33/B/ST1/00275 (Poland). A.~K\"aenm\"aki and M.~Rams were supported by the Erd\H{o}s Center. The project was part of the semester program ``Fractals and Hyperbolic Dynamical Systems'' organized and funded by the Erd\H{o}s Center.}
\subjclass[2000]{Primary 28A80, 37C45; Secondary 60D05}
\keywords{Random self-affine set, Hausdorff dimension}
\date{\today}

\begin{abstract}
  We compute the almost sure Hausdorff dimension of random self-affine sponges in $\R^d$ without imposing any separation conditions. In this context, randomness arises from the matrices in the defining semigroup, which are random yet the corresponding affine maps share a fixed point.
\end{abstract}

\maketitle

\section{Introduction and results} \label{sec:intro}

Let $\Phi = (\fii_1, \ldots, \fii_N)$ denote a tuple of contractive affine maps $\fii_i \colon \mathbb{R}^d \to \mathbb{R}^d$, defined by $\fii_i(x) = A_i x + v_i$, where $\A = (A_1, \ldots, A_N) \in \GL_d(\mathbb{R})^N$ consists of invertible $d \times d$ matrices, and $\mathsf{v} = (v_1, \ldots, v_N) \in (\mathbb{R}^d)^N$ is a collection of translation vectors. According to the seminal result of Hutchinson \cite{Hutchinson1981}, for each such $\Phi$, there exists a non-empty compact set $X \subset \mathbb{R}^d$ satisfying
\begin{equation*}
  X = \bigcup_{i=1}^N \fii_i(X).
\end{equation*}
This set $X$, known as the \emph{self-affine set}, is uniquely determined by $\Phi$. Throughout this work, we adopt the convention that any reference to a self-affine set $X$ implicitly assumes it is associated with a specific tuple $\Phi$ of affine maps that generates it. Our primary objective in this paper is to investigate the Hausdorff dimension, denoted $\dim_{\mathrm{H}}(X)$, of random variants of the self-affine set $X$.

The \emph{singular value function} of a matrix $A \in \mathrm{GL}_d(\mathbb{R})$ is defined as
\begin{equation*}
  \varphi^s(A) =
  \begin{cases}
    \alpha_1(A) \cdots \alpha_{\lfloor s \rfloor}(A) \alpha_{\lceil s \rceil}(A)^{s - \lfloor s \rfloor}, & \text{if } 0 \le s \le d, \\
    |\det(A)|^{s/d}, & \text{if } s > d,
  \end{cases}
\end{equation*}
where $0 < \alpha_d(A) \le \cdots \le \alpha_1(A) = \|A\| < 1$ represent the singular values of $A$, with $\|A\|$ denoting the operator norm. The \emph{affinity dimension} of the tuple $\A = (A_1, \ldots, A_N)$, denoted $\dimaff(\A)$, is the unique value $s$ for which the pressure function
\begin{equation*}
  P(\A,s) = \lim_{n \to \infty} \frac{1}{n} \log \sum_{i_1, \ldots, i_n} \varphi^s(A_{i_1} \cdots A_{i_n})
\end{equation*}
equals zero. According to Falconer \cite[Theorem 5.4]{Falconer1988}, the upper Minkowski dimension of the self-affine set $X$, denoted $\udimm(X)$, is bounded above by $\dimaff(\A)$. Let $\dimm(A)$ be the Minkowski dimension of a bounded set $A \subseteq \mathbb{R}^d$ and $\LL^d$ the Lebesgue measure on $\mathbb{R}^d$. Under a generic selection of translation vectors $\mathsf{v} = (v_1, \ldots, v_N) \in (\R^d)^N$, the affinity dimension serves as a lower bound for the Hausdorff dimension. Indeed, Falconer \cite[Theorem 5.3]{Falconer1988} established that if $\max_{i \in \{1, \ldots, N\}} \|A_i\| < \frac{1}{2}$, then for $\LL^{dN}$-almost every $\mathsf{v} \in (\R^d)^N$, the self-affine set $X_{\mathsf{v}}$ satisfies
\begin{equation*}
  \dimh(X_{\mathsf{v}}) = \dimm(X_{\mathsf{v}}) = \min\{d, \dimaff(\A)\}.
\end{equation*}
Initially, Falconer proved this result with a norm bound of $\frac{1}{3}$, but Solomyak \cite{Solomyak1998} later sharpened it to $\frac{1}{2}$, a threshold shown to be optimal by Edgar's example \cite{Edgar1992}. Alternatively, one can fix the translation vectors and consider a generic selection of matrices. Results in this direction, covering various settings, are detailed in B\'ar\'any, K\"aenm\"aki, and Koivusalo \cite[Theorems A and B, Proposition 2.5]{BaranyKaenmakiKoivusalo2018}.

By leveraging a separation assumption, the aforementioned results can be enhanced in the planar setting to encompass a topologically and measure-theoretically generic set of matrices. We say that a self-affine set $X$ satisfies the \emph{exponential separation condition} if there exists a constant $c > 0$ such that
\begin{equation*}
  \|\varphi_{i_1} \circ \cdots \circ \varphi_{i_n} - \varphi_{j_1} \circ \cdots \circ \varphi_{j_n}\| \ge c^n
\end{equation*}
for all $n \in \N$, whenever $i_k \ne j_k$ for some $k \in \{1, \ldots, n\}$. Additionally, $X$ satisfies the \emph{strong open set condition} if there exists an open set $U \subset \R^d$ such that $U \cap X \ne \emptyset$, $\varphi_i(U) \cap \varphi_j(U) = \emptyset$ for all $i \ne j$, and $\varphi_i(U) \subseteq U$ for all $i \in \{1, \ldots, N\}$. As shown in \cite[\S 6.2]{BaranyHochmanRapaport2019}, the strong open set condition implies exponential separation. Furthermore, we say that $X$ satisfies the \emph{fixed point condition} if the affine maps in $\Phi$ do not share a common fixed point. A tuple $\A \in \GL_d(\R)^N$ is termed \emph{proximal} if the semigroup it generates contains a matrix with a simple dominant eigenvalue, and \emph{strongly irreducible} if no finite collection $\mathcal{V}$ of proper subspaces exists such that $A_i \mathcal{V} = \mathcal{V}$ for all $i \in \{1, \ldots, N\}$. Hochman and Rapaport \cite[Theorem 1.1]{HochmanRapaport2022} proved that for a self-affine set $X \subset \R^2$ satisfying both the exponential separation condition and the fixed point condition, with an associated matrix tuple $\A$ that is proximal and strongly irreducible, it holds that
\begin{equation*}
  \dimh(X) = \dimm(X) = \min\{2, \dimaff(\A)\}.
\end{equation*}
More recently, Morris and Sert \cite[Theorem 1.5]{MorrisSert2023preprint}, building on Rapaport \cite[Theorem 1.9]{Rapaport2024}, extended this result to $\R^3$ under the strong open set condition.

Although reducible matrices represent a topologically and measure-theoretically exceptional subset, they merit attention due to their inclusion of significant examples, such as self-affine sponges. A self-affine set $X$ is termed a \emph{self-affine sponge} if its associated matrix tuple $\A = (A_1, \ldots, A_N)$ comprises diagonal matrices, with $A_i = \diag(\alpha_i^{(1)}, \ldots, \alpha_i^{(d)})$. In the planar case, these sets are commonly known as \emph{self-affine carpets}, a prominent subclass of self-affine sets that has been thoroughly explored since the 1980s; see, for instance, \cite{Bedford1984,McMullen1984}. A self-affine sponge has \emph{distinct diagonal entries} if, for every pair $j,k \in \{1,\ldots,d\}$ with $j \ne k$, there is an index $i \in \{1,\ldots,N\}$ such that $|\alpha_i^{(j)}| \ne |\alpha_i^{(k)}|$. The affinity dimension $\dimaff(\A)$ in this context is the unique $s \geq 0$ satisfying
\begin{equation} \label{eq:permutation-affinity}
  \max_{\sigma} \sum_{i=1}^N \fii_\sigma^s(A_i) = 1,
\end{equation}
where $\max_\sigma$ denotes the maximum over all permutations $\sigma$ of $\{1,\ldots,d\}$ and
\begin{equation} \label{eq:permutation-svf}
  \fii_\sigma^s(A) =
  \begin{cases}
    |\alpha^{(\sigma(1))}| \cdots |\alpha^{(\sigma(\lfloor s \rfloor))}| |\alpha^{(\sigma(\lceil s \rceil))}|^{s-\lfloor s \rfloor}, &\text{if } 0 \le s \le d, \\
    (|\alpha^{(\sigma(1))}| \cdots |\alpha^{(\sigma(d))}|)^{s/d}, &\text{if } s > d,
  \end{cases}
\end{equation}
for $A=\diag(\alpha^{(1)},\ldots,\alpha^{(d)})$. Rapaport established the following significant result in \cite[Theorem 1.3]{Rapaport2023preprint}. 

\begin{theorem} \label{thm:rapaport}
  If $X \subset \R^d$ is a self-affine sponge whose coordinate projections satisfy the exponential separation condition and has distinct diagonal entries, then
  \begin{equation*}
    \dimh(X) = \dimm(X) = \min\{d,\dimaff(\A)\}.
  \end{equation*}
\end{theorem}

Feng recently extended \cref{thm:rapaport} to self-affine measures in \cite[Theorem 1.3]{Feng2025preprint}, assuming a simple Lyapunov spectrum for the self-affine measure. Specifically, consider the set of Lyapunov exponents of the equilibrium state, defined as
\begin{equation*}
  \biggl\{ \sum_{i=1}^N \fii_\sigma^{\dimaff(\A)}(A_i) \log |\alpha_i^{(k)}| \biggr\}_{k=1}^d,
\end{equation*}
where $\sigma$ is a permutation that attains the maximum in \cref{eq:permutation-affinity}. Assuming this set consists of exactly $d$ distinct elements, Feng demonstrated the existence of a dimension-maximizing measure in \cite[Corollary 1.9]{Feng2025preprint}.

In light of \cref{thm:rapaport}, a natural question arises: can randomization yield a generic dimension result without imposing separation conditions? Jordan, Pollicott, and Simon \cite[Theorem 1.5]{JordanPollicottSimon2007} explored this by introducing a random variant of the self-affine set, perturbing the translation vector independently at each stage of its iterated construction. They proved that the Hausdorff dimension of such a randomized set almost surely equals $\dimaff(\A)$, even without requiring $\max_{i \in \{1, \ldots, N\}} \|A_i\| < \frac{1}{2}$. Jordan and Jurga \cite[Theorem 1.8]{JordanJurga} generalized this result, allowing perturbations to follow distributions with unbounded support. For random self-similar sets, Dekking, Simon, Székely, and Szekeres \cite[Theorem 4]{DekkingSimonSzekelySzekeres} proved that, on the real line, independent perturbations of translation vectors ensure that a set with positive Lebesgue measure almost surely has non-empty interior. Additionally, in \cite[Theorem 1]{DekkingSimonSzekelySzekeres}, they extended a prior result from \cite[Theorem 1]{DekkingSimonSzekely}, proving that the algebraic difference of two randomized Cantor sets on the real line almost surely contains an interval. Gu and Miao \cite{gu2024} investigated the $L^q$-dimension of randomized self-similar measures in this framework, while B\'ar\'any and Rams \cite[Theorem 1.1]{br24} examined their absolute continuity, identifying conditions for a smooth density.

Given the positive answer in the context of randomly perturbed translation vectors, the question naturally reframes itself as: what can be said when the matrices are similarly randomized while the translation vectors are held fixed? Early work in this direction includes Falconer \cite{Falconer_randomfractals}, Graf \cite{Graf}, Graf, Mauldin, and Williams \cite{GrafMauldinWilliams}, and Mauldin and Williams \cite{MauldinWilliams}. Though framed in a broader context, their key ideas are distilled in Falconer’s book \cite[Theorem~15.1]{Falconerbook}. For self-similar sets on the real line, where the contraction rates of similarities are independent, uniformly distributed random variables at each iteration, Peres, Simon, and Solomyak \cite[Theorem 2.1]{PeresSimonSolomyak} analyzed the absolute continuity of random self-similar measures, while Koivusalo \cite[Theorem 2.2]{Koivusalo} computed the almost sure Hausdorff dimension of random self-similar sets. In our main result, \cref{thm:main}, we extend this framework to higher dimensions by randomizing the matrices within the setting of \cref{thm:rapaport}. We compute the almost sure Hausdorff dimension of random self-affine sponges without requiring separation conditions. The resulting dimension formula generalizes the affinity dimension, analogous to how Koivusalo’s formula extends the similarity dimension. Before presenting this result, we introduce essential preliminaries and define the setting in detail.

\subsection{Preliminaries}
Let $N \in \N$ be such that $N \ge 2$ and write $\II = \{1,\ldots,N\}$. The set of \emph{infinite words} is $\Sigma = \II^\N$ and the \emph{left shift} $\sigma \colon \Sigma \to \Sigma$ is defined by setting $\sigma\iii = \sigma(\iii) = i_2i_3\cdots$ for all $\iii = i_1i_2\cdots \Sigma$. Let $\Sigma_*$ be the free monoid on $\{1,\ldots,N\}$. The set $\Sigma_*$ is the set of all \emph{finite words} $\Sigma_* = \{\varnothing\} \cup \bigcup_{n=1}^\infty \Sigma_n$, where $\Sigma_n = \II^n$ for all $n \in \N$ and $\varnothing$ satisfies $\varnothing \iii = \iii\varnothing = \iii$. We also set $\Sigma_0 = \{\varnothing\}$ for completeness. The concatenation of two words $\iii \in \Sigma_*$ and $\jjj \in \Sigma_* \cup \Sigma$ is denoted by $\iii\cdot\jjj$ or just $\iii\jjj$ if there is no possibility of confusion. The length of $\iii \in \Sigma_* \cup \Sigma$ is denoted by $|\iii|$. If $\iii \in \Sigma_*$, then we set $[\iii] = \{\iii\jjj \in \Sigma : \jjj\in \Sigma\}$ and call it a \emph{cylinder set}. If $\jjj \in \Sigma_* \cup \Sigma$ and $0 \le n < |\jjj|$, then we define $\jjj|_n$ to be the unique word $\iii \in \Sigma_n$ for which $\jjj \in [\iii]$. Finally, if $\iii,\jjj \in \Sigma_* \cup \Sigma$, then $\iii \land \jjj = \iii|_{|\iii\land\jjj|} = \jjj|_{|\iii\land\jjj|}$, where $|\iii\land\jjj| = \min\{n \in \N : \iii|_n \ne \jjj|_n\}-1$, is the \emph{longest common prefix} of $\iii$ and $\jjj$. We use the convention that $|$ is utilized first and then $\land$ is applied before $\cdot\,$, so for example by $\iii \land \jjj|_n \cdot \kkk$ we mean the word $(\iii \land (\jjj|_n)) \kkk$.

Recall that, for Borel probability measures $\mu$ and $\nu$ on $\R$, the \emph{convolution} of $\mu$ and $\nu$ is
\begin{equation*}
	(\mu \ast \nu)(A) = \iint \mathds{1}A(x+y) \dd\mu(x) \dd\nu(y)
\end{equation*}
for all Borel sets $A \subseteq \R$, where $\mathds{1}A$ is the characteristic function of $A$. We also set $\mu^{\ast 2} = \mu \ast \mu$ and recursively $\mu^{\ast(n+1)} = \mu^{\ast n} \ast \mu$ for all $n \in \N$. We say that the distribution $\mu$ of a real-valued random variable is \emph{eventually smooth} if there is $n \in \N$ such that the $n$-fold convolution $\mu^{\ast n}$ is absolutely continuous with continuous density, i.e.\ there exists a continuous function $g \colon \R \to \R$ such that
\begin{equation} \label{eq:smooth-def}
  \mu^{\ast n}(A) = \int_A g \dd\LL^1
\end{equation}
for all Borel sets $A \subseteq \R$.

Let $\{e_1,\ldots,e_d\}$ be the standard orthonormal basis of $\R^d$. Let $t_1,\ldots,t_N \in \R^d$ be such that the set $\{t_1 \cdot e_k,\ldots,t_N \cdot e_k\}$ is not a singleton for all $k \in \{1,\ldots,d\}$. Fix $0<\lalpha\le\ualpha<1$ and let $d \in \N$ be such that $d \ge 2$. Let $(\Omega,\mathcal{F},\mathbb{P})$ be a probability space and $\{\alpha_\iii^{(k)} : \iii\in\Sigma_* \text{ and } k\in\{1,\ldots,d\}\}$ be a collection of random variables such that
\begin{enumerate}[(R1)]
  \item\label{it:ass1} the vector-valued random variables 
  $\{(\alpha_{\iii 1}^{(\ell)},\ldots,\alpha_{\iii N}^{(\ell)}) : \iii\in\Sigma_* \text{ and } \ell\in\{1,\ldots,d\}\}$ are independent,
  \item\label{it:ass1b} the vector-valued random variables $\{(\alpha_{\iii 1}^{(\ell)},\ldots,\alpha_{\iii N}^{(\ell)}) : \iii\in\Sigma_*\}$ are identically distributed for all $\ell \in \{1,\ldots,d\}$,
  \item\label{it:ass2} $|\alpha_{\jjj}^{(k)}|\in[\lalpha,\ualpha]$ almost surely for all $k\in\{1,\ldots,d\}$ and $\jjj\in\Sigma_*$,
  \item\label{it:ass3} for every $k\in\{1,\ldots,d\}$ there is $\ell_k$ such that the distribution of $\log \alpha_{\ell_k}^{(k)}$ is eventually smooth.
\end{enumerate}
For each $k \in \{1,\ldots,d\}$ we also fix $\ell_k'$ such that $t_{\ell_k}\cdot e_k \neq t_{\ell_k'}\cdot e_k$, where $\ell_k$ is as in \ref{it:ass3}. Let
\begin{equation*}
  A_\iii =
  \begin{pmatrix}
    \alpha_\iii^{(1)} & 0 & \cdots & 0 \\
    0 & \alpha_\iii^{(2)} & \cdots & 0 \\
    \vdots & \vdots & \ddots & \vdots \\
    0 & 0 & \cdots & \alpha_\iii^{(d)}
  \end{pmatrix}
\end{equation*}
for all $\iii \in \Sigma_*$. Here $A_\varnothing = I$ is the identity matrix. Let
\begin{equation*}
  \Phi = \{ f_{\iii}(x) = A_{\iii} x + t_{i_n} (I - A_{\iii}) : \iii \in \Sigma_* \}
\end{equation*}
be a collection of random affine maps, referred to as a \emph{random iterated function system} (RIFS). There exists a compact set $B \subset \R^d$ such that $f_{\iii}(B) \subseteq B$ for every possible realization and every $\iii \in \Sigma_*$. Therefore, the random set
\begin{equation*}
  X = \bigcap_{n=0}^\infty \bigcup_{\iii \in \Sigma_n} f_{\iii|_0} \circ \cdots \circ f_{\iii|_n}(B)
\end{equation*}
is non-empty and compact for every realization. We call $X$ the \emph{self-affine sponge with random contractions}. Define the \emph{canonical projection} $\Pi \colon \Sigma \to \R^d$ by
\begin{equation*}
  \Pi(\iii) = \lim_{n \to \infty} f_{\iii|_0} \circ \cdots \circ f_{\iii|_n}(0) = \sum_{n=1}^\infty A_{\iii|_0} \cdots A_{\iii|_{n-1}} (I - A_{\iii|_n}) t_{i_n},
\end{equation*}
for all $\iii = i_1 i_2 \cdots \in \Sigma$. Consequently, the set $X$ satisfies $X = \Pi(\Sigma)$. Let $s_0$ be the unique real number satisfying
\begin{equation*}
  \max_{\sigma} \sum_{i=1}^N \mathbb{E} (\fii_\sigma^{s_0}(A_i)) = 1,
\end{equation*}
where $\fii_\sigma(A)$ is as defined in \cref{eq:permutation-svf}. By convention, whenever we refer to a self-affine sponge $X$ with random contractions, it is understood to be equipped with the definitions and notation described above.

The \emph{Hausdorff dimension} of $A \subseteq \R^d$ is defined as
\begin{align*}
  \dimh(A) = \inf\{s>0 : \;&\text{for every } \eps>0 \text{ there is } \{U_i\}_{i \in \N} \text{ such} \\
  &\text{that } A \subseteq \bigcup_{i \in \N} U_i \text{ and } \sum_{i \in \N} \diam(U_i)^s < \eps\},
\end{align*}
where $\diam(U_i)$ denotes the diameter of $U_i$. The \emph{upper Minkowski dimension} of a bounded set $A \subset \R^d$ is given by
\begin{equation} \label{eq:def-minkowski}
  \udimm(A) = \limsup_{r \downarrow 0} \frac{\log N_r(A)}{-\log r},
\end{equation}
where
\begin{equation*}
  N_r(A) = \min\{k \in \N : A \subseteq \bigcup_{i=1}^k B(x_i,r) \text{ for some } x_1,\ldots,x_k \in \R^d\}
\end{equation*}
is the smallest number of closed balls of radius $r>0$ needed to cover $A$. If the limit above exists, then the \emph{Minkowski dimension} of $A$, denoted $\dimm(A)$, is defined as this limit. It is well-known that for a compact set $A \subset \R^d$, if $\udimm(A) \le \dimh(A)$, then $\dimh(A) = \dimm(A)$.

\subsection{Main result and examples}
We are now ready to state our main result.

\begin{theorem}\label{thm:main}
  If $X \subset \R^d$ is a self-affine sponge with random contractions, then we almost surely have
  \begin{equation*}
    \dimh(X) = \dimm(X) = \min\{d,s_0\}.
  \end{equation*}
  If $s_0>d$, then $\LL^d(X)>0$ almost surely.
\end{theorem}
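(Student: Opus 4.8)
The plan is to establish the upper bound $\udimm(X)\le\min\{d,s_0\}$ and the lower bound $\dimh(X)\ge\min\{d,s_0\}$ separately; since one always has $\dimh(X)\le\udimm(X)$, the two bounds squeeze $\min\{d,s_0\}\le\dimh(X)\le\udimm(X)\le\min\{d,s_0\}$, and the criterion quoted after \cref{eq:def-minkowski} then upgrades the upper Minkowski dimension to the Minkowski dimension, forcing all three quantities to coincide almost surely. For the upper bound I would cover $X=\Pi(\Sigma)$ by the level-$n$ cylinder boxes $f_{\iii|_1}\circ\cdots\circ f_{\iii|_n}(B)$, whose side length in coordinate $k$ is comparable to $\prod_{m=1}^n|\alpha_{\iii|_m}^{(k)}|$, so that the Falconer-type covering sum at the corresponding scales is governed by $\sum_{\iii\in\Sigma_n}\fii^s(A_{\iii|_1}\cdots A_{\iii|_n})$. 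Using that $\fii^s=\max_\sigma\fii_\sigma^s$ for $0\le s\le d$, that each $\fii_\sigma^s$ is multiplicative on the diagonal semigroup, and that the random data attached to the nested prefixes $\iii|_1,\ldots,\iii|_n$ are independent and identically distributed by \cref{it:ass1} and \cref{it:ass1b}, a tree-sum computation gives $\E\bigl(\sum_{\iii\in\Sigma_n}\fii^s(A_{\iii|_1}\cdots A_{\iii|_n})\bigr)\le\sum_\sigma\bigl(\sum_{i=1}^N\E(\fii_\sigma^s(A_i))\bigr)^n$. For $s>s_0$ each base is strictly below $1$, so the expectation decays geometrically; Markov's inequality and Borel--Cantelli along a geometric sequence of scales then yield $\udimm(X)\le s$ almost surely, and intersecting over a sequence $s\downarrow s_0$ gives the bound.

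For the lower bound I would push an appropriately weighted Bernoulli measure $\nu$ on $\Sigma$---with weights realizing the permutation that optimizes the definition of $s_0$---forward to $\mu=\Pi_*\nu$ and estimate the lower local dimension of $\mu$. The mechanism that replaces separation is a transversality estimate coming from the randomness: writing $\Pi(\iii)-\Pi(\jjj)=A_{\iii|_1}\cdots A_{\iii|_n}\,\delta$ with $n=|\iii\land\jjj|$, the factor $\delta$ is a difference of sub-projections built from data independent of the shared diagonal factor, and its component along $e_k$ is a nondegenerate quantity whenever the two branches separate the $k$-th translation coordinates. Injecting the eventual smoothness of \cref{it:ass3} through the designated symbols, propagating it by the coordinate convolutions as in \cref{eq:smooth-def}, and using the independence of distinct coordinates from \cref{it:ass1}, one shows that $\langle\delta,e_k\rangle$ has a bounded density, whence $\E(|\langle\delta,e_k\rangle|^{-t})<\infty$ for every $t<1$ and each $k$. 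The delicate point, and what I expect to be the main obstacle, is to combine these one-dimensional estimates across the $d$ anisotropic scales produced by $A_{\iii|_1}\cdots A_{\iii|_n}$: recovering the sharp exponent $\min\{d,s_0\}$ rather than a lossy bound calls for a Ledrappier--Young-type slicing, in which one projects onto the subspace spanned by the least contracted coordinates, controls the conditional measures on the complementary fibres, and inducts on the dimension, all while keeping the overlap count controlled only in expectation.

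Finally, when $s_0>d$ I would take $s=d$ and upgrade the lower bound to absolute continuity. Writing $\mu_r$ for the $r$-mollification of $\mu$, one has $\E(\|\mu_r\|_2^2)\asymp\iint r^{-d}\,\mathbb{P}(|\Pi(\iii)-\Pi(\jjj)|\le r)\,\dd\nu\,\dd\nu$, and the same transversality estimate---now used at the critical exponent to show that $\Pi(\iii)-\Pi(\jjj)$ has a bounded density in $\R^d$, which is precisely the smoothing that $s_0>d$ makes available in all $d$ coordinates---gives $\mathbb{P}(|\Pi(\iii)-\Pi(\jjj)|\le r)\le Cr^d$ and hence $\sup_r\E(\|\mu_r\|_2^2)<\infty$. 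Then $\mu_r$ converges in $L^2$ along a subsequence to a density for $\mu$, so $\mu\ll\LL^d$ almost surely and $\LL^d(X)\ge\LL^d(\spt\mu)>0$.
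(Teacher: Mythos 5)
Your overall architecture (covering argument plus Borel--Cantelli for the upper bound, transversality plus an energy estimate for the lower bound, a density argument at exponent $d$ for absolute continuity) matches the paper, but the lower bound as you set it up would fail, and for a reason that is the heart of this problem. You push forward a \emph{deterministic} Bernoulli measure $\nu$. Run your own scheme to the end: after conditioning on the common prefix $\hhh$ and applying the transversality estimate, the $t$-energy is controlled by sums of the form $\sum_{b}\sum_{|\hhh|=b}\nu([\hhh])^2\,\E\bigl(\psi^t(\hhh)^{-1}\bigr)$, where $\psi^t(\hhh)^{-1}$ is the reciprocal of the \emph{realized} singular value function of the cylinder $\hhh$. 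With deterministic weights $q_i$ this factorizes (per permutation) into $\sum_b\bigl(\sum_{i}q_i^2\,\E(\fii_\sigma^t(i)^{-1})\bigr)^b$, and optimizing over $(q_i)$ shows convergence only for $t$ below the solution of $\sum_i\E\bigl(\fii_\sigma^t(i)^{-1}\bigr)^{-1}=1$. By Jensen's inequality $\E\bigl(\fii_\sigma^t(i)^{-1}\bigr)^{-1}\le\E\bigl(\fii_\sigma^t(i)\bigr)$, with equality only when $\fii_\sigma^t(i)$ is almost surely constant, so this critical exponent falls \emph{strictly} short of $s_0$ whenever the contractions are genuinely random: no deterministic choice of weights reaches $s_0$. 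The paper's central device, \cref{thm:measure} (following Koivusalo), exists precisely to fix this: a \emph{random} measure $\mu_n$ with $\mu_n([\iii])=\overline{\fii}_\sigma^{s_n}(\iii)X^{\iii}$, built as an $L^2$-martingale limit, whose weights are correlated with the realization so that in the energy estimate the squared weight cancels against the transversality factor, $\E\bigl(\psi^{s_n}(\hhh)^2\psi^t(\hhh)^{-1}\bigr)\le\ualpha^{(s_n-t)b}\,\E\bigl(\psi^{s_n}(\hhh)\bigr)\le d!\,\ualpha^{(s_n-t)b}$. This is the missing idea, not a technicality.

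Two further gaps. First, your claim that the coordinate of the branch difference has a bounded density is unjustified and in general false: the eventual-smoothness assumption \ref{it:ass3} concerns \emph{one} symbol $\ell_k$ per coordinate only, and all other contraction ratios may be deterministic, so pairs $\iii,\jjj$ whose words after the branching avoid $\ell_k$ receive no smoothing at all (there is also a degeneracy when the relevant point sits at the fixed point $t_{\ell_k}$). The paper resolves this by restricting to the subsystem $\Gamma_n$ whose blocks are forced to end with $\lll_1\kkk_1\cdots\lll_d\kkk_d$, proving transversality only there (\cref{prop:trans}, \cref{prop:transfin}), and recovering the full exponent via $s_n\to s_0$; your sketch has no analogue of this. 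Moreover, once one has the product-form bound $\prod_{k=1}^d\min\bigl\{1,C\roo/|\overline{\alpha}^{(k)}_{\iii\land\jjj}|\bigr\}$, integrating in $\roo$ yields exactly a $\psi^t(\hhh)^{-1}$ term, so the anisotropy is handled directly: the Ledrappier--Young slicing you flag as the ``main obstacle'' is neither needed nor supplied. Second, a smaller point on the upper bound: level-$n$ cylinders give the Hausdorff bound, but the covering cubes of different cylinders have sides ranging over $[\lalpha^n,\ualpha^n]$, so converting the covering sum into $N_r(X)$ at a single scale loses a factor $\log\lalpha/\log\ualpha$ in the Minkowski exponent. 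The paper avoids this with the stopping-time cover $C_n$ in \cref{lem:ub}, where every word has its $\ell$-th singular value comparable to $\ualpha^n$; without that (or some equivalent), you prove $\dimh(X)\le\min\{d,s_0\}$ but not $\udimm(X)\le\min\{d,s_0\}$, and hence not the claimed equality of Hausdorff and Minkowski dimensions.
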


The theorem generalizes Koivusalo’s result \cite[Theorem 2.2]{Koivusalo} in both method and scope. At the methodological level, we follow the martingale-and-energy framework of Falconer \cite{Falconer_randomfractals} and Koivusalo \cite{Koivusalo}: construct a random measure on the symbolic space via an $L^2$-bounded martingale and combine it with a transversality estimate to obtain the lower bound. The nonconformal, diagonal setting forces two adaptations: we replace similarity ratios by the singular value function and recover multiplicativity through $n$-step subsystems, and we allow block dependence in the martingale weights. At the level of scope, the randomness is in the contractions while the fixed points are deterministic, so overlaps can be substantial. This differs from random-translation models in two ways: the translation vectors do not affect the affinity dimension, whereas the matrices do, so the target dimension depends on the randomness itself; and the fixed points may coincide, giving strong overlap without any absolute continuity of the perturbations (we only assume eventual smoothness for the contraction ratios). The lower bound uses $n$-step multiplicative subsystems to approximate the singular value function from below (giving $s_n \uparrow s_0$), a martingale measure that allows dependence within blocks, and a transversality estimate. The upper bound follows from a stopping-time covering argument, yielding $\udimm(X)\le s_0$ almost surely.

To conclude the introduction, we present a series of illustrative examples. We begin by examining a statistically self-similar set in $\R$, constructed with minimal randomness in the contraction ratios. This example, among other insights, demonstrates that our framework generalizes the result of Koivusalo~\cite{Koivusalo} already in the one-dimensional setting.

\begin{example} \label{ex:first}
	Fix $0 < \lalpha \leq \ualpha < 1$, and let $\mu$ be a Borel probability measure supported on $[\log \lalpha, \log \ualpha]$. Assume that the Fourier transform of $\mu$ exhibits polynomial decay: there exist constants $C > 0$ and $s > 0$ such that
  \begin{equation*}
    |\widehat{\mu}(\xi)| \leq C (1 + |\xi|)^{-s}
  \end{equation*}
  for all $\xi \in \R$, where
  \begin{equation*}
    \widehat{\mu}(\xi) = \int e^{i \xi x} \, \mathrm{d}\mu(x).
  \end{equation*}
  For an integer $n \geq 1$ satisfying $n s > 1$, the Fourier transform $\widehat{\mu^{*n}}$ of the $n$-fold convolution $\mu^{*n}$ lies in $L^1(\mathbb{R})$. By~\cite[Theorem~3.4]{Mattila15}, this implies that $\mu^{*n}$ is absolutely continuous with a continuous density. Polynomial Fourier decay arises naturally in many fractal measures including all self-conformal measures with a non-linearity condition; see, for instance, Algom, Rodriguez Hertz, and Wang~\cite{algom2024} and Baker and Banaji~\cite{BakerBanaji25}.
  \begin{figure}
    \centering
    \includegraphics[width=0.5\linewidth]{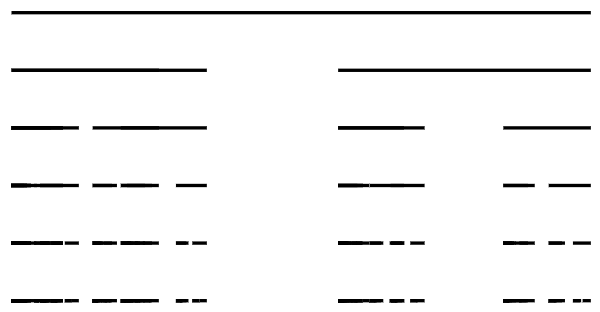}
    \caption{Illustration for the self-similar set with random contractions defined in \cref{ex:first}.}
    \label{fig:picture1}
  \end{figure}

  Let $N \in \N$ be with $N \geq2 $, and write $\{1, \ldots, N\}^* = \{\varnothing\} \cup \bigcup_{n=1}^\infty \{1, \ldots, N\}^n$. Consider a collection of independent and identically distributed random variables $\{ \alpha_{\iii 1} : \iii \in \{1, \ldots, N\}^* \}$, where the distribution of $\log \alpha_{\iii 1}$ is given by the measure $\mu$. For each $i \in \{2,\ldots,N\} $, fix deterministic constants $0 < \alpha_i < 1$, and set $\alpha_{\iii i} = \alpha_i$ for all $\iii \in \{1, \ldots, N\}^*$. Additionally, let $t_1,\ldots,t_N \in \R$ be the fixed points such that $t_i \ne t_j$ for some $i \ne j$. The resulting self-similar set $X$ with random contractions clearly satisfies assumptions~\ref{it:ass1}--\ref{it:ass3}. Notably, each iteration involves only one random cylinder, distinguishing this framework from previously studied settings and extending the scope of existing results.

  Consider an example where the random variables $\{\alpha_{\iii 1} : \iii \in \{1,2,3\}^*\}$ are independently and uniformly distributed on $[\frac{1}{3}, \frac{1}{2}]$. Define the similarities by setting
  \begin{equation*}
    f_{\iii 1}(x) = \alpha_{\iii 1} x + (1 - \alpha_{\iii 1}), \quad f_{\iii 2}(x) = \frac{x}{3}, \quad f_{\iii 3}(x) = \frac{x}{4}
  \end{equation*}
  for all $\iii \in \{1,2,3\}^*$ and $x \in \R$. Despite exact overlaps in the system, such as $f_{\iii 2} \circ f_{\iii 3} = f_{\iii 3} \circ f_{\iii 2}$ for all $\iii \in \{1,2,3\}^*$, the Hausdorff dimension of the self-similar set $X$ with random contractions is almost surely $s = 1$, the unique solution to
  \begin{equation*}
    \int_{\frac{1}{3}}^{\frac{1}{2}} 6 x^s \dd x + \frac{1}{3^s} + \frac{1}{4^s} = 1.
  \end{equation*}
  The first five iterates of the random iterated function system $\Phi = \{ f_{\iii 1}, f_{\iii 2}, f_{\iii 3} : \iii \in \{1,2,3\}^* \}$ are illustrated in~\cref{fig:picture1}.
\end{example}

Our second example draws inspiration from two sources: the higher-dimensional generalization of the random cookie cutter set discussed in Falconer’s book \cite[\S 15]{Falconerbook}; and a probabilistic extension of the generalized $4$-corner set studied by B\'ar\'any \cite{Barany12}.

\begin{example}\label{ex:4corner}
  Consider the unit square $[0,1]^2$. Place a random rectangle at each corner of the square such that the side lengths are absolutely continuous with continuous densities and the rectangles are pairwise disjoint. Repeat this process independently within each rectangle ad infinitum and consider the intersection of the unions of rectangles at each step. More precisely, consider the following random system: Let $\delta>0$ and for each $\iii\in\{1,2,3,4\}^*$ and $k \in \{1,2\}$, let $(\alpha_{\iii1}^{(k)},\alpha_{\iii2}^{(k)},\alpha_{\iii3}^{(k)},\alpha_{\iii4}^{(k)})$ be independent and identically distributed, absolutely continuous vector-valued random variables with continuous joint density such that $\mathbb{P}(\alpha_{\iii}^{(k)}\geq\delta)=1$ and
	\begin{equation}\label{eq:probcond}
	\begin{split}
    \mathbb{P}(\max\{\alpha_{\iii 1}^{(1)},\alpha_{\iii 2}^{(1)}\}+\max\{\alpha_{\iii 3}^{(1)},\alpha_{\iii 4}^{(1)}\}\leq 1)&=1,\\
    \mathbb{P}(\max\{\alpha_{\iii 1}^{(2)},\alpha_{\iii 4}^{(2)}\}+\max\{\alpha_{\iii 2}^{(2)},\alpha_{\iii 3}^{(2)}\}\leq 1)&=1.
	\end{split}
	\end{equation}
	Define the affine maps by setting
  \begin{equation} \label{eq:rifs2}
  \begin{split}
    f_{\iii1}(x,y) &= (\alpha_{\iii1}^{(1)}x,\alpha_{\iii1}^{(2)}y), \\
    f_{\iii2}(x,y) &= (\alpha_{\iii2}^{(1)}x,\alpha_{\iii2}^{(2)}y+(1-\alpha_{\iii2}^{(2)})), \\
    f_{\iii3}(x,y) &= (\alpha_{\iii3}^{(1)}x+(1-\alpha_{\iii3}^{(1)}),\alpha_{\iii3}^{(2)}y+(1-\alpha_{\iii3}^{(2)})), \\ 
    f_{\iii4}(x,y) &= (\alpha_{\iii4}^{(1)}x+(1-\alpha_{\iii4}^{(1)}),\alpha_{\iii4}^{(2)}y),
  \end{split}
  \end{equation}
  for all $\iii \in \{1,2,3,4\}^*$ and $(x,y) \in \R^2$. The self-affine sponge with random contractions is given by
  \begin{equation*}
    X=\bigcap_{n=0}^\infty\bigcup_{\iii\in\{1,2,3,4\}^n}f_{\iii|_0}\circ\cdots\circ f_{\iii|_n}([0,1]^2).
  \end{equation*}
  The system satisfies assumptions~\ref{it:ass1}--\ref{it:ass3} without requiring the condition~\eqref{eq:probcond}. The condition~\eqref{eq:probcond} ensures that the rectangles at each level are pairwise disjoint, making the system particularly relevant for constructions like the random cookie cutter set or the $4$-corner set, though it is not essential for the dimension result.
  \begin{figure}
    \centering
    \includegraphics[width=0.3\linewidth]{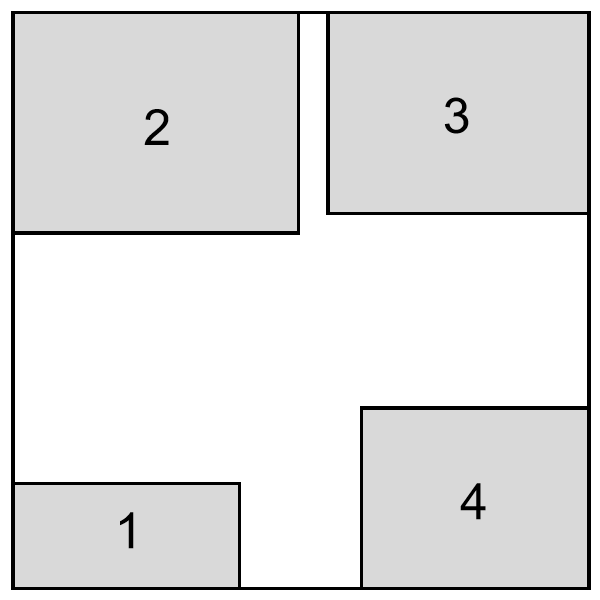}\quad\includegraphics[width=0.3\linewidth]{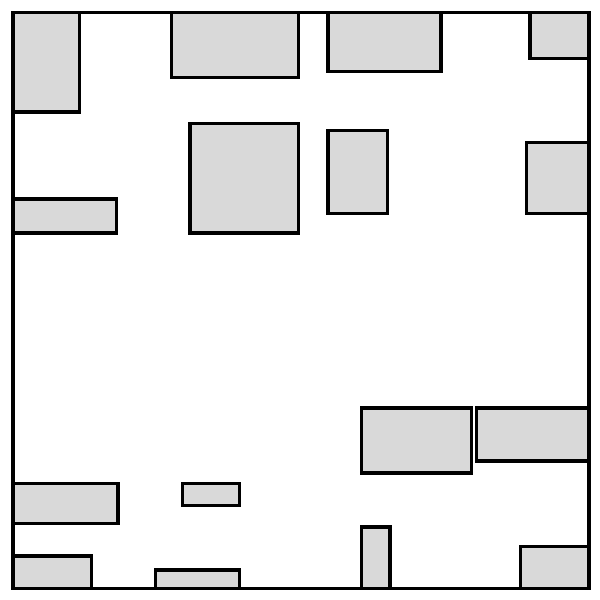}\quad\includegraphics[width=0.3\linewidth]{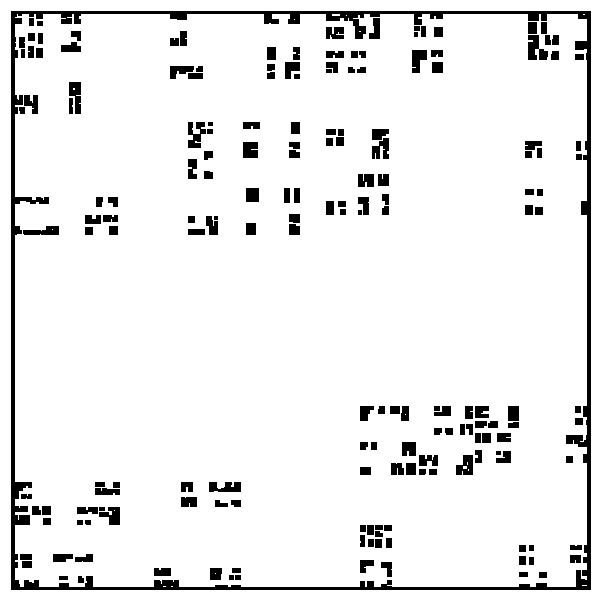}
    \caption{The first and second iterates of the RIFS defined in \eqref{eq:rifs2}, and the random 4-corner set $X$ of \cref{ex:4corner}.}
    \label{fig:picture2}
  \end{figure}

  For example, one can choose $\alpha_{\iii}^{(k)}$ independent and identically distributed with distribution uniform on $[\frac{1}{10},\frac12]$. In this case, the Hausdorff dimension of $X$ is almost surely $s \approx 1.14273$, which is the unique solution of the equation
  \begin{equation*}
    4\biggl(\int_{\frac{1}{10}}^{\frac12}\frac52x\dd x\biggr)\biggl(\int_{\frac{1}{10}}^{\frac12}\frac52x^{s-1}\dd x\biggr)=1.
  \end{equation*}
  The first two iterates of the random iterated function system $\Phi = \{f_{\iii 1}, f_{\iii 2}, f_{\iii 3}, f_{\iii 4} : \iii \in \{1,2,3,4\}^*\}$ are illustrated in~\cref{fig:picture2}.
\end{example}

For the last example, we modify \cref{ex:4corner} to introduce a more pronounced alignment structure in the random iterated function system.

\begin{example}\label{ex:mod4corner}
  \begin{figure}
    \centering
    \includegraphics[width=0.3\linewidth]{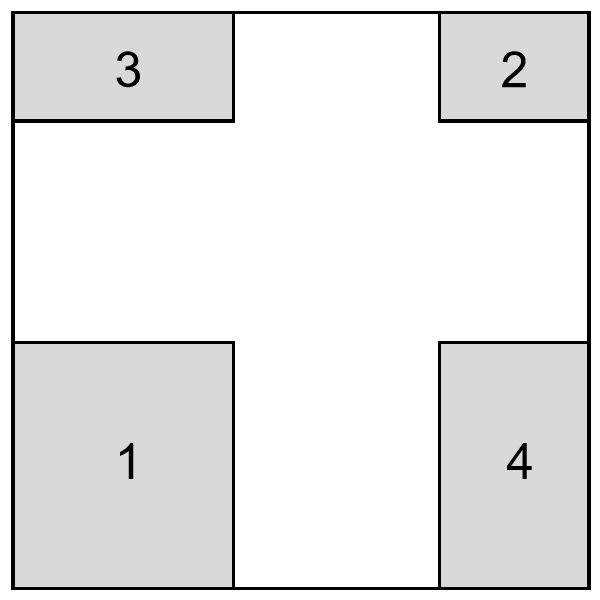}\quad\includegraphics[width=0.3\linewidth]{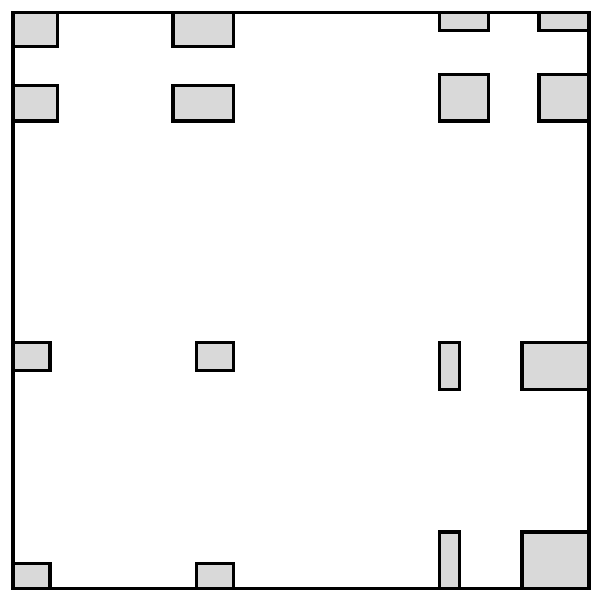}\quad\includegraphics[width=0.3\linewidth]{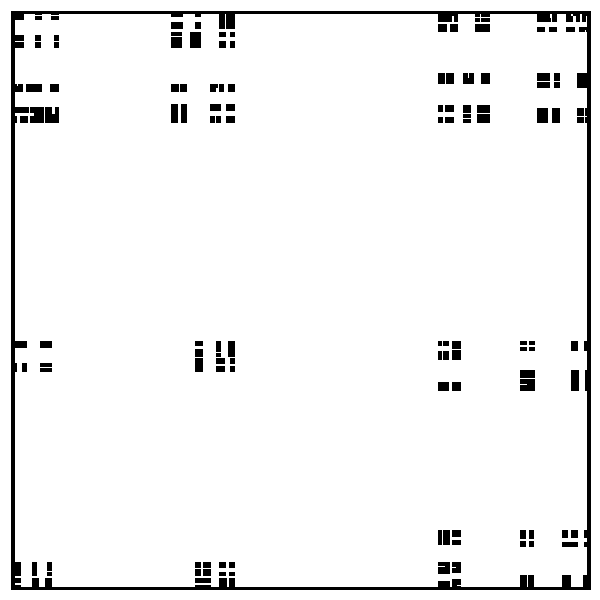}
    \caption{The first and second iterates of the RIFS defined in \eqref{eq:rifs2b}, and the random 4-corner set $X$ of \cref{ex:mod4corner}.}
    \label{fig:picture3}
  \end{figure}

  Let $\delta>0$ and for each $\iii\in\{1,2,3,4\}^*$, let $\{(\alpha_{\iii}^{(1)},\alpha_{\iii}^{(2)}):\iii\in\{1,2,3,4\}^*\}\cup\{(\alpha_{\iii}^{(3)},\alpha_{\iii}^{(4)}):\iii\in\{1,2,3,4\}^*\}$ be independent, absolutely continuous vector-valued random variables such that $\{(\alpha_{\iii}^{(1)},\alpha_{\iii}^{(2)}):\iii\in\{1,2,3,4\}^*\}$ are identically distributed with continuous joint density, $\{(\alpha_{\iii}^{(3)},\alpha_{\iii}^{(4)}):\iii\in\{1,2,3,4\}^*\}$ are identically distributed with continuous joint density, $\mathbb{P}(\alpha_{\iii}^{(k)}\geq\delta)=1$, and
  \begin{equation*}
    \mathbb{P}(\alpha_{\iii}^{(1)}+\alpha_{\iii}^{(2)}\leq 1)=1=\mathbb{P}(\alpha_{\iii}^{(3)}+\alpha_{\iii}^{(4)}\leq 1).
  \end{equation*}
  Define the affine maps by setting
  \begin{equation} \label{eq:rifs2b}
  \begin{split}
    f_{\iii1}(x,y) &= (\alpha_{\iii}^{(1)}x,\alpha_{\iii}^{(3)}y), \\
    f_{\iii2}(x,y) &= (\alpha_{\iii}^{(2)}x+(1-\alpha_{\iii}^{(2)}),\alpha_{\iii4}^{(4)}y+(1-\alpha_{\iii}^{(4)})), \\
    f_{\iii3}(x,y) &= (\alpha_{\iii}^{(1)}x,\alpha_{\iii}^{(4)}y+(1-\alpha_{\iii}^{(4)})), \\ 
    f_{\iii4}(x,y) &= (\alpha_{\iii}^{(2)}x+(1-\alpha_{\iii}^{(2)}),\alpha_{\iii}^{(3)}y),
  \end{split}
  \end{equation}
  for all $\iii \in \{1,2,3,4\}^*$ and $(x,y) \in \R^2$. The system again satisfies the assumptions \ref{it:ass1}--\ref{it:ass3}. By taking $\alpha_{\iii}^{(k)}$ independent and identically distributed with distribution uniform on $[\frac{1}{10},\frac12]$, we see that the almost sure dimension value is the same as in \cref{ex:4corner}. The first two iterates of the random iterated function system $\Phi = \{f_{\iii 1}, f_{\iii 2}, f_{\iii 3}, f_{\iii 4} : \iii \in \{1,2,3,4\}^*\}$ are illustrated in~\cref{fig:picture3}.
\end{example}

\subsection{Further discussions}
It is natural to ask whether Theorem~\ref{thm:main} extends to random matrices beyond the diagonal case. In such a generality, the critical parameter would likely be defined by a subadditive pressure of an expected singular value function rather than the explicit formula for $s_0$. Two structural inputs of our proof are tied to diagonality. First, for diagonal matrices the functions $\fii_\sigma^s$ are multiplicative, which makes the $n$-step subsystem construction and the martingale measure possible; for general matrices the singular value function is only submultiplicative, and a submartingale limit may degenerate to the zero measure. Second, our transversality estimate relies on independence of the coordinate projections; in the non-diagonal case no coordinate system is preserved and a product-type estimate is unavailable.

Even in the planar case with distinct real eigenvalues, new difficulties appear. Each matrix is diagonalizable, but the eigenbasis varies with the word, so there is no common invariant splitting. Overlaps can align along random stable directions, and the coordinatewise reduction to one-dimensional transversality breaks down. Random matrix theory (of Oseledets and Furstenberg) provides Lyapunov exponents and random invariant directions for products in $\GL_2(\R)$, and one may hope to use Ledrappier-Young type formulas for measures. However, these tools do not yield the uniform transversality required for the lower bound, nor do they provide a nondegenerate random measure compatible with the submultiplicativity of the singular value function. In this sense, the planar diagonalizable case already seems to require genuinely new ideas and is indicative of the difficulty of the general non-diagonal setting.

We expect the methods to be more flexible in intermediate settings such as block-diagonal or triangular random matrices with dominated splitting, where some multiplicativity survives within blocks and partial transversality might still be available. In forthcoming work of the first author (with Ryan Bushling) on random fractal interpolation functions with lower-triangular matrices, one already sees that establishing a transversality lemma becomes technically delicate when the weakly contracting direction is not fixed.

\section{Construction of random measures} \label{sec:measure}

In this section, we construct a random measure on the symbolic space that matches the distribution of the singular function $\fii^{s_0}$. This measure is used in \cref{sec:lower-bound} to establish an almost sure lower bound for the Hausdorff dimension. The construction employs a martingale argument, inspired by Falconer \cite{Falconer_randomfractals} and Koivusalo \cite{Koivusalo}. The presentation is designed to be self-contained and comprehensive. 

Let $(\Omega,\mathcal{F},\mathbb{P})$ be a probability space and let $\{p_\iii : \iii \in \Sigma_*\}$ be a collection of positive random variables such that
\begin{enumerate}[(P1)]
	\item\label{it:dist} the vector valued random variables $\{(p_{\iii 1},\ldots,p_{\iii N}):\iii\in\Sigma_*\}$ are independent and identically distributed,
	\item\label{it:expect} $\mathbb{E}(\sum_{j=1}^Np_j)=1$ and $\mathbb{E}(\sum_{j=1}^Np_j^2)<1$.
\end{enumerate}
Let $\mathcal{F}_k = \{p_\iii^{-1}(A) : \iii \in \bigcup_{n=0}^k \Sigma_n \text{ and } A \subseteq \R \text{ is a Borel set}\}$ be the $\sigma$-algebra generated by $\{p_{\iii} : \iii \in \bigcup_{n=0}^k \Sigma_n\}$ and write
\begin{equation*}
  \overline{p}_{\iii}^\jjj=p_{\jjj\cdot\iii|_1}\cdots p_{\jjj\iii}
\end{equation*}
for all $\iii,\jjj\in\Sigma_*$.

\begin{theorem}\label{thm:measure}
  If $\{p_\iii : \iii \in \Sigma_*\}$ is a collection of positive random variables satisfying \ref{it:dist}--\ref{it:expect}, then there exists a random finite Borel measure $\mu$ on $\Sigma$ such that
	\begin{enumerate}
		\item $0<\mu(\Sigma)<\infty$ almost surely,
		\item $\mathbb{E}(\mu([\iii])\,|\,\mathcal{F}_k)=\overline{p}_{\iii}^{\varnothing}$ for every $\iii\in\Sigma_k$,
		\item $\mathbb{E}(\sum_{\iii \in \Sigma_k}\mu([\iii]))=1$,
	\end{enumerate}
  for all $k \in \N$.
\end{theorem}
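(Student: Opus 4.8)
The plan is to realize $\mu$ as the limit of a Mandelbrot-type multiplicative cascade on $\Sigma$, built from an $L^2$-bounded non-negative martingale. First I would introduce the total masses $W_k = \sum_{\iii \in \Sigma_k} \overline{p}_\iii^\varnothing$ (so $W_0 = 1$) and check that $(W_k)_k$ is a martingale for the filtration $(\mathcal{F}_k)_k$. Indeed, each $W_k$ is $\mathcal{F}_k$-measurable because $\overline{p}_\iii^\varnothing = \prod_{n=1}^k p_{\iii|_n}$ involves only variables indexed by words of length at most $k$. Splitting off the first symbol gives $W_{k+1} = \sum_{\jjj \in \Sigma_k} \overline{p}_\jjj^\varnothing \sum_{j=1}^N p_{\jjj j}$, and since the vector $(p_{\jjj 1},\ldots,p_{\jjj N})$ is indexed by a word of length $k$ it is, by \ref{it:dist}, independent of $\mathcal{F}_k$ (which is generated by the vectors indexed by words of length at most $k-1$). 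Hence \ref{it:expect} yields $\mathbb{E}(\sum_j p_{\jjj j} \mid \mathcal{F}_k) = \mathbb{E}(\sum_j p_j) = 1$ and therefore $\mathbb{E}(W_{k+1} \mid \mathcal{F}_k) = W_k$ with $\mathbb{E}(W_k) = 1$ for all $k$.

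Next I would establish $L^2$-boundedness, which is where \ref{it:expect} is essential. Writing $W_k = \sum_{j=1}^N p_j W_{k-1}^{(j)}$ with $W_{k-1}^{(j)} = \sum_{\iii' \in \Sigma_{k-1}} \overline{p}_{\iii'}^j$ the subtree masses, \ref{it:dist} makes $W_{k-1}^{(1)},\ldots,W_{k-1}^{(N)}$ independent copies of $W_{k-1}$, independent of $(p_1,\ldots,p_N)$. Expanding the square and using $\mathbb{E}(W_{k-1}^{(j)}) = 1$ gives
\begin{equation*}
  \mathbb{E}(W_k^2) = \mathbb{E}\Bigl(\sum_{j=1}^N p_j^2\Bigr)\,\mathbb{E}(W_{k-1}^2) + \mathbb{E}\Bigl(\sum_{j \ne j'} p_j p_{j'}\Bigr),
\end{equation*}
an affine recursion whose multiplier $\mathbb{E}(\sum_j p_j^2)$ is $<1$, so $\sup_k \mathbb{E}(W_k^2) < \infty$. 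Thus $(W_k)_k$ is uniformly integrable and converges almost surely and in $L^2$ to some $W_\infty$ with $\mathbb{E}(W_\infty) = 1$ and $W_\infty < \infty$ almost surely. To rule out $W_\infty = 0$, I would use that each $p_\iii$ is strictly positive: passing to the limit in $W_\infty = \sum_j p_j W_\infty^{(j)}$ shows $W_\infty = 0$ forces all $W_\infty^{(j)} = 0$, so the extinction probability $q = \mathbb{P}(W_\infty = 0)$ satisfies $q = q^N$; as $N \ge 2$ and $\mathbb{E}(W_\infty) = 1$ excludes $q=1$, we get $q = 0$ and hence $0 < W_\infty < \infty$ almost surely.

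To construct $\mu$, I would run the same martingale in every subtree: for each $\iii \in \Sigma_*$ the limit $W_\infty^{(\iii)} = \lim_n \sum_{\jjj \in \Sigma_n} \overline{p}_\jjj^\iii$ exists almost surely with $\mathbb{E}(W_\infty^{(\iii)}) = 1$, and, $\Sigma_*$ being countable, all these limits exist simultaneously on a single almost sure event. Letting $n \to \infty$ in $W_n^{(\iii)} = \sum_j p_{\iii j} W_{n-1}^{(\iii j)}$ gives the cascade identity $W_\infty^{(\iii)} = \sum_j p_{\iii j} W_\infty^{(\iii j)}$, so setting $\mu([\iii]) = \overline{p}_\iii^\varnothing W_\infty^{(\iii)}$ defines a non-negative, finitely additive function on cylinders satisfying $\mu([\iii]) = \sum_j \mu([\iii j])$. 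Compactness of $\Sigma$ upgrades finite to countable additivity, and Carath\'eodory's theorem extends it to a Borel measure with $\mu(\Sigma) = W_\infty$, giving~(1). For~(2), fix $\iii \in \Sigma_k$: the prefactor $\overline{p}_\iii^\varnothing$ is $\mathcal{F}_k$-measurable while $W_\infty^{(\iii)}$ depends only on vectors indexed by words extending $\iii$ (length $\ge k$), hence is independent of $\mathcal{F}_k$ by \ref{it:dist}, so $\mathbb{E}(\mu([\iii]) \mid \mathcal{F}_k) = \overline{p}_\iii^\varnothing \mathbb{E}(W_\infty^{(\iii)}) = \overline{p}_\iii^\varnothing$; summing and taking expectations over $\iii \in \Sigma_k$ yields~(3).

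The hard part is the $L^2$ estimate in the second step: it simultaneously delivers uniform integrability (so that $\mathbb{E}(W_\infty) = 1$ rather than $0$) and, via the $q = q^N$ dichotomy, non-degeneracy of the limit, and it is the only place the strict inequality $\mathbb{E}(\sum_j p_j^2) < 1$ is used. The accompanying bookkeeping of which product-vectors are independent of $\mathcal{F}_k$ and of one another across subtrees must be handled carefully, since it underlies both the moment recursion and the conditional-expectation identities in~(2).
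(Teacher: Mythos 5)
Your proposal is correct and follows essentially the same route as the paper: the paper likewise builds the measure from the $L^2$-bounded cascade martingales $X_k^{\iii} = \sum_{\jjj \in \Sigma_k}\overline{p}_{\jjj}^{\iii}$ (for every $\iii\in\Sigma_*$ simultaneously), proves non-degeneracy via the same $q=q^N$ fixed-point argument using positivity of the $p_\iii$, and extends $\mu([\iii]) = \overline{p}_{\iii}^{\varnothing}X^{\iii}$ to a Borel measure by the Hahn--Kolmogorov theorem. The only cosmetic difference is that you derive the second-moment bound by splitting at the root, yielding the affine recursion $\mathbb{E}(W_k^2) = \mathbb{E}\bigl(\sum_j p_j^2\bigr)\mathbb{E}(W_{k-1}^2) + \mathbb{E}\bigl(\sum_{j\ne j'}p_jp_{j'}\bigr)$, whereas the paper expands at the leaves and sums a geometric series with ratio $\mathbb{E}\bigl(\sum_j p_j^2\bigr)<1$; the two computations are equivalent.
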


\begin{proof}
	Write
	$$
    X_k^{\iii} = \sum_{\jjj \in \Sigma_k}\overline{p}_{\jjj}^{\iii}
	$$
  for all $\iii\in\Sigma_*$ and $k \in \N$. We will show that for every $\iii\in\Sigma_*$, $X_k^{\iii}$ forms an $L^2$-bounded martingale with respect to $\mathcal{F}_{k+|\iii|}$ and hence, almost surely for every $\iii\in\Sigma_*$, there exist $L^2$-random variables $X^{\iii}$ such that
	\begin{enumerate}
		\item $X_k^{\iii}\to X^{\iii}$ almost surely and in $L^2$,
		\item\label{it:formeas} $X^{\iii}=\sum_{\jjj \in \Sigma_n}\overline{p}_{\jjj}^{\iii}X^{\iii\jjj}$,
		\item $\mathbb{E}(X^{\iii})=1$,
		\item $X^{\iii}>0$ almost surely.
	\end{enumerate}
By defining $\mu$ on cylinder sets via $\mu([\iii])=\overline{p}_{\iii}^\varnothing X^{\iii}$, the Hahn-Kolmogorov theorem (see~\cite[Theorem~1.7.8]{Tao}) ensures, in conjunction with~\eqref{it:formeas}, that $\mu$ extends uniquely to a well-defined finite Borel measure and has the desired properties.

First, let us show that $X^{\iii}_k$ forms a martingale. This follows since
\begin{align*}
	\mathbb{E}(X_{k+1}^{\iii}\,|\,\mathcal{F}_{|\iii|+k})&=\mathbb{E}\biggl(\sum_{\jjj \in \Sigma_{k+1}}\overline{p}^{\iii}_{\jjj}\,\Big|\,\mathcal{F}_{|\iii|+k}\biggr)=\mathbb{E}\biggl(\sum_{\jjj \in \Sigma_k}\sum_{j=1}^N\overline{p}^{\iii}_{\jjj}p_{\iii\jjj j}\,\Big|\,\mathcal{F}_{|\iii|+k}\biggr)\\
	&=\sum_{\jjj \in \Sigma}\overline{p}^{\iii}_{\jjj}\mathbb{E}\biggl(\sum_{j=1}^Np_{\iii\jjj j}\,\Big|\,\mathcal{F}_{|\iii|+k}\biggr)=\sum_{\jjj \in \Sigma_k}\overline{p}^{\iii}_{\jjj}\mathbb{E}\biggl(\sum_{j=1}^Np_{\iii\jjj j}\biggr)\\
	&=\sum_{\jjj \in \Sigma_k}\overline{p}^{\iii}_{\jjj}=X^{\iii}_k,
\end{align*}
where we used both \ref{it:dist} and \ref{it:expect}. Let us next show that $X_k^{\iii}$ is $L^2$-bounded. Observe that
\begin{align}
  \mathbb{E}((X_{k+1}^\iii)^2)&=\mathbb{E}\biggl(\sum_{\jjj \in \Sigma_{k+1}}\sum_{\kkk \in \Sigma_{k+1}}\overline{p}^{\iii}_{\jjj}\overline{p}^{\iii}_{\kkk}\biggr)
  =\mathbb{E}\biggl(\sum_{\jjj \in \Sigma_k}\sum_{\kkk \in \Sigma_k}\sum_{j=1}^N\sum_{k=1}^N\overline{p}^{\iii}_{\jjj}\overline{p}^{\iii}_{\kkk}p_{\iii\jjj j}p_{\iii\kkk k}\biggr)\notag\\
  &=\sum_{\jjj \in \Sigma_k}\sum_{\kkk \in \Sigma_k}\mathbb{E}(\overline{p}^{\iii}_{\jjj}\overline{p}^{\iii}_{\kkk})\mathbb{E}\biggl(\sum_{j=1}^N\sum_{k=1}^Np_{\iii\jjj j}p_{\iii\kkk k}\biggr) \label{eq:measure1}\\
  &=\sum_{\jjj \in \Sigma_k}\mathbb{E}((\overline{p}^{\iii}_{\jjj})^2)\mathbb{E}\biggl(\sum_{j=1}^N\sum_{k=1}^Np_{\iii\jjj j}p_{\iii\jjj k}\biggr)
  +\sum_{\atop{\jjj,\kkk\in\Sigma_k}{\jjj\neq\kkk}}\mathbb{E}(\overline{p}^{\iii}_{\jjj}\overline{p}^{\iii}_{\kkk})\mathbb{E}\biggl(\sum_{j=1}^N\sum_{k=1}^Np_{\iii\jjj j}p_{\iii\kkk k}\biggr). \notag
\end{align}
By \ref{it:dist} and \ref{it:expect}, we have
\begin{equation} \label{eq:measure2}
\begin{split}
  \sum_{\atop{\jjj,\kkk\in\Sigma_k}{\jjj\neq\kkk}}\mathbb{E}(\overline{p}^{\iii}_{\jjj}\overline{p}^{\iii}_{\kkk})\mathbb{E}\biggl(\sum_{j=1}^N\sum_{k=1}^Np_{\iii\jjj j}p_{\iii\kkk k}\biggr)&=\sum_{\atop{\jjj,\kkk\in\Sigma_k}{\jjj\neq\kkk}}\mathbb{E}(\overline{p}^{\iii}_{\jjj}\overline{p}^{\iii}_{\kkk})\sum_{j=1}^N\sum_{k=1}^N\mathbb{E}(p_{\iii\jjj j})\mathbb{E}(p_{\iii\kkk k})\\
  &=\sum_{\atop{\jjj,\kkk\in\Sigma_k}{\jjj\neq\kkk}}\mathbb{E}(\overline{p}^{\iii}_{\jjj}\overline{p}^{\iii}_{\kkk})\mathbb{E}\biggl(\sum_{j=1}^Np_{\iii\jjj j}\biggr)\mathbb{E}\biggl(\sum_{k=1}^Np_{\iii\kkk k}\biggr)\\
  &=\sum_{\atop{\jjj,\kkk\in\Sigma_k}{\jjj\neq\kkk}}\mathbb{E}(\overline{p}^{\iii}_{\jjj}\overline{p}^{\iii}_{\kkk})=\mathbb{E}((X_k^{\iii})^2)-\sum_{\jjj \in \Sigma_k}\mathbb{E}((\overline{p}^{\iii}_{\jjj})^2),
\end{split}
\end{equation}
where in the last equality we used \ref{it:dist}, \ref{it:expect}, and the definition of $X_k^{\iii}$. Furthermore,
\begin{equation} \label{eq:measure3}
\begin{split}
  \sum_{\jjj \in \Sigma_k}\mathbb{E}((\overline{p}^{\iii}_{\jjj})^2)&=\sum_{\jjj \in \Sigma_k}\mathbb{E}(p_{\iii\cdot\jjj|_1}^2 \cdots p_{\iii\jjj}^2)=\sum_{\jjj \in \Sigma_k}\mathbb{E}(p_{\iii\cdot\jjj|1}^2) \cdots \mathbb{E}(p_{\iii\jjj}^2)\\
  &=\sum_{\jjj \in \Sigma_k}\mathbb{E}(p_{j_1}^2) \cdots \mathbb{E}(p_{j_k}^2)=\biggl(\mathbb{E}\biggl(\sum_{j=1}^Np_j^2\biggr)\biggr)^k,
\end{split}
\end{equation}
where we used \ref{it:dist}. Thus, putting \cref{eq:measure1} and \cref{eq:measure2} together yields
\begin{align*}
	\mathbb{E}((X_{k+1}^\iii)^2)&=\sum_{\jjj \in \Sigma_k}\mathbb{E}((\overline{p}^{\iii}_{\jjj})^2)\mathbb{E}\biggl(\sum_{j=1}^N\sum_{k=1}^Np_{\iii\jjj j}p_{\iii\jjj k}-1\biggr)+\mathbb{E}((X_{k}^\iii)^2)\\
	&=\biggl(\mathbb{E}\biggl(\sum_{j=1}^Np_j^2\biggr)\biggr)^k\mathbb{E}\biggl(\sum_{j=1}^N\sum_{k=1}^Np_{j}p_{k}-1\biggr)+\mathbb{E}((X_{k}^\iii)^2),
\end{align*}
where in the last equality we used \cref{eq:measure3} and \ref{it:dist}. Then, by induction, we have
\begin{equation*}
  \mathbb{E}((X_{k}^\iii)^2)\leq\frac{\mathbb{E}\bigl(\sum_{j=1}^N\sum_{k=1}^Np_{j}p_{k}-1\bigr)}{1-\mathbb{E}\bigl(\sum_{j=1}^Np_j^2\bigr)}<\infty
\end{equation*}
for all $k\in\N$, which had to be shown. By Doob's $L^2$-martingale convergence theorem \cite[Theorem~VII.4.1]{Doob}, we now have that $\mathbb{E}(X^\iii)=\mathbb{E}(X^\iii_k)=1$ for all $k \in \N$. Furthermore, since $X_{k+1}^{\iii}=\sum_{j=1}^Np_{\iii j}X^{\iii j}_{k}$ by definition, we also have $X^{\iii}=\sum_{j=1}^Np_{\iii j}X^{\iii j}$ almost surely for every $\iii\in\Sigma_*$.

To see that $X^\iii>0$ almost surely, observe that $X^{\iii}$ and $X^{\jjj}$ have the same distribution for every $\iii,\jjj\in\Sigma_*$. Indeed, one can see by induction that the distribution of $X_k^\iii$ and $X_k^{\jjj}$ are identical for all $k\in\N$. On the other hand, for every $|\iii|=|\jjj|$ with $\iii\neq\jjj$ and $k\in\N$, $X_k^{\iii}$ and $X_k^{\jjj}$ are independent and so are $X^{\iii}$ and $X^{\jjj}$. Let $q=\mathbb{P}(X^\iii=0)$. Since $\mathbb{E}(X^{\iii})=1$, we have $0\le q<1$. Hence,
\begin{align*}
  q&=\mathbb{P}(X^{\iii}=0)=\mathbb{P}\biggl(\sum_{j=1}^Np_{\iii j}X^{\iii j}=0\biggr)\\
  &=\mathbb{P}\biggl(\bigcap_{j=1}^N\{X^{\iii j}=0\}\biggr)=\prod_{j=1}^N\mathbb{P}(X^{\iii j}=0)=q^N
\end{align*}
showing that $q=0$ and finishing the proof.
\end{proof}

\section{Transversality over subsystems} \label{sec:trans}

In this section, we establish a transversality property, following Jordan, Pollicott, and Simon \cite{JordanPollicottSimon2007}, to derive a lower bound for the Hausdorff dimension. We start with the simpler one-dimensional case, deferring the general setting to \cref{sec:space}.

\subsection{Transversality on the line} \label{sec:line}
Let $t_1,\ldots,t_N \in \R$ be such that $t_1 \le \cdots \le t_N$ with $t_1 < t_N$. Fix $0<\lalpha\le \ualpha<1$ and let $\{\alpha_\iii : \iii \in \Sigma_*\}$ be a collection of random variables in the probability space $(\Omega,\mathcal{F},\mathbb{P})$ such that
\begin{enumerate}[(R1')]
  \item\label{it:ass1R} the vector valued random variables $\{(\alpha_{\iii 1},\ldots,\alpha_{\iii N}) : \iii \in \Sigma_* \}$ are independent and identically distributed,
  \item\label{it:ass2R} $|\alpha_{\iii}^{(k)}|\in[\lalpha,\ualpha]$ almost surely for all $k\in\{1,\ldots,d\}$ and $\iii\in\Sigma_*$,
  \item\label{it:ass3R} there is $\ell$ such that the distribution of $\log \alpha_{\ell}$ is eventually smooth.
\end{enumerate}
We also fix $\ell'$ such that $t_\ell \neq t_{\ell'}$, where $\ell$ is as in \ref{it:ass3R}. At each $\iii\in\Sigma_*$, we consider a randomly chosen homothety $f_\iii$ having fixed point $t_{i_{|\iii|}}$ such that
\begin{equation}\label{eq:ssdef}
  f_{\iii}(x) = \alpha_\iii x + (1-\alpha_\iii)t_{i_{|\iii|}}.
\end{equation}
To simplify notation, we define $\overline{\alpha}_{\iii}^{\jjj} = \alpha_{\jjj\cdot\iii|_1} \cdots \alpha_{\jjj\cdot\iii|_{|\iii|}}$ and $\overline{f}_\iii^\jjj=f_{\jjj\cdot\iii|_1}\circ\cdots\circ f_{\jjj\cdot\iii|_{|\iii|}}$ for all $\iii,\jjj\in \Sigma_*$. We also write $\overline{f}_\iii = \overline{f}_\iii^\varnothing$, $\overline{\alpha}_\iii = \overline{\alpha}_\iii^\varnothing$, and $\alpha_\varnothing=\overline{\alpha}^\jjj_\varnothing\equiv1$. It is clear that, if $\jjj \in \Sigma_*$ and $\iii \in \Sigma$, then we have
\begin{align*}
  |\overline{f}_{\iii|_n}^\jjj(x)-\overline{f}_{\iii|_n}^\jjj(y)| &= |f_{\jjj\cdot\iii|_{1}} \circ \cdots \circ f_{\jjj\cdot\iii|_{n}}(x)-f_{\jjj\cdot\iii|_{1}} \circ \cdots \circ f_{\jjj\cdot\iii|_{n}}(y)| \\
  &= |\overline{\alpha}^\jjj_{\iii|_n}||x-y|\leq \doverline{\alpha}^n|x-y|
\end{align*}
for all $x,y\in\R$ and $n \in \N$. Hence, for every $\jjj\in\Sigma_*$, we can define the \emph{canonical projection} $\pi_{\jjj} \colon \Sigma \to \R$ by setting
\begin{equation}\label{eq:defnatproj11}
\begin{split}
  \pi_{\jjj}(\iii) &=\lim_{n\to\infty}\overline{f}_{\iii|_n}^\jjj(0)= \lim_{n \to \infty} f_{\jjj\cdot\iii|_{1}} \circ \cdots \circ f_{\jjj\cdot\iii|_{n}}(0)\\
  &= \sum_{n=1}^\infty \alpha_{\jjj\cdot\iii|_1} \cdots \alpha_{\jjj\cdot\iii|_{n-1}} (1-\alpha_{\jjj\cdot\iii|_n})t_{i_n} = \sum_{n=1}^\infty \overline{\alpha}_{\iii|_{n-1}}^{\jjj} (1-\alpha_{\jjj\cdot\iii|_n})t_{i_n}
\end{split}
\end{equation}
for all $\iii \in \Sigma$. Writing $\pi = \pi_{\varnothing}$, we observe that $\pi(\Sigma)$ is a self-affine sponge with random contractions in the real line. In this case, we also call $\pi(\Sigma)$ a \emph{self-similar set with random contractions}.

\begin{lemma}\label{lem:inint}
  If $X \subset \R$ is a self-similar set with random contractions, then, independently of the realization of the random variables,
  \begin{equation*}
    \pi_{\jjj}(\iii) \in \Bigl[t_1-\frac{\ualpha(t_N-t_1)}{1-\ualpha},t_N+\frac{\ualpha(t_N-t_1)}{1-\ualpha}\Bigr]
  \end{equation*}
  for all $\jjj\in\Sigma_*$ and $\iii\in\Sigma$.
\end{lemma}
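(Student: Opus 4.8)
The plan is to exhibit a single compact interval $J$ that is mapped into itself by every homothety $f_\iii$, regardless of the realization, and then to identify $\pi_{\jjj}(\iii)$ as a point trapped inside $J$. Set $c = \frac{\ualpha(t_N - t_1)}{1 - \ualpha}$ and $J = [t_1 - c, t_N + c]$, which is precisely the interval appearing in the statement.

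First I would prove the claim that $f_\iii(J) \subseteq J$ for every $\iii \in \Sigma_*$ and every realization. Writing $p = t_{i_{|\iii|}} \in [t_1, t_N]$ for the deterministic fixed point of $f_\iii$, formula \eqref{eq:ssdef} reads $f_\iii(x) = p + \alpha_\iii(x - p)$, so $f_\iii$ is monotone with slope $\alpha_\iii$, and by \ref{it:ass2R} we have $|\alpha_\iii| \le \ualpha$. The verification splits according to the sign of $\alpha_\iii$. When $\alpha_\iii \ge 0$ the map is increasing and contracts toward the interior point $p$, so it maps the endpoints of $J$ back into $J$ for any $c \ge 0$ and the value of $c$ plays no role. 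The delicate case is $\alpha_\iii < 0$, where $f_\iii$ is orientation-reversing and can push a point past the fixed point to the opposite side. Evaluating $f_\iii$ at the two endpoints of $J$ and using $p \in [t_1, t_N]$ together with $|\alpha_\iii| \le \ualpha$, the requirement $f_\iii(J) \subseteq J$ reduces, after a short computation, to the single inequality $c(1 - \ualpha) \ge \ualpha(t_N - t_1)$, which is exactly how $c$ was chosen. This negative-slope case is the main (and essentially the only) obstacle, and it is what forces the precise value of $c$.

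Once $f_\iii(J) \subseteq J$ is established, I would conclude as follows. Fix $\jjj \in \Sigma_*$ and $\iii \in \Sigma$, and write $g_n = \overline{f}_{\iii|_n}^\jjj = f_{\jjj \cdot \iii|_1} \circ \cdots \circ f_{\jjj \cdot \iii|_n}$. Each $g_n$ is a composition of maps preserving $J$, hence $g_n(J) \subseteq J$; moreover $g_n$ contracts distances by the factor $|\overline{\alpha}_{\iii|_n}^\jjj| \le \ualpha^n \to 0$, as already noted before \eqref{eq:defnatproj11}. Since by definition $\pi_{\jjj}(\iii) = \lim_n g_n(0)$ and the contraction factor tends to $0$, the limit is independent of the starting point; choosing any $z \in J$ gives $\pi_{\jjj}(\iii) = \lim_n g_n(z)$ with each $g_n(z) \in J$. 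As $J$ is closed, $\pi_{\jjj}(\iii) \in J$, which is the assertion.

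As an alternative that bypasses the case analysis, one can estimate the series in \eqref{eq:defnatproj11} directly. Writing $t_{i_n} = m + s_n$ with $m = \frac{t_1+t_N}{2}$ and $|s_n| \le \frac{t_N - t_1}{2}$, the telescoping identity $\sum_{n \ge 1} \overline{\alpha}_{\iii|_{n-1}}^\jjj (1 - \alpha_{\jjj \cdot \iii|_n}) = 1$ yields $\pi_{\jjj}(\iii) - m = \sum_{n \ge 1} \overline{\alpha}_{\iii|_{n-1}}^\jjj (1 - \alpha_{\jjj \cdot \iii|_n}) s_n$, and the bounds $|\overline{\alpha}_{\iii|_{n-1}}^\jjj| \le \ualpha^{n-1}$ and $|1 - \alpha_{\jjj \cdot \iii|_n}| \le 1 + \ualpha$ give $|\pi_{\jjj}(\iii) - m| \le \frac{(1+\ualpha)(t_N - t_1)}{2(1-\ualpha)}$, whose right-hand side is exactly the half-length of $J$. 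I would likely present the invariant-interval argument as the main proof, since it transparently explains the role of the constant $c$, and keep the series estimate as a remark.
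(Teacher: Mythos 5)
Your proposal is correct, and your main argument takes a genuinely different route from the paper's. The paper does not isolate a one-step invariance statement; instead it expands the full composition in telescoped form,
\begin{equation*}
  \overline{f}_{\iii|_n}^\jjj(x) = t_{i_1}+\sum_{k=1}^{n-1} \overline{\alpha}_{\iii|_{k}}^\jjj(t_{i_{k+1}}-t_{i_k}) + \overline{\alpha}_{\iii|_{n}}^\jjj (x-t_{i_n}),
\end{equation*}
and bounds $|\overline{f}_{\iii|_n}^\jjj(x)-t_{i_1}|$ by $\sum_{k=1}^{n-1}\ualpha^k(t_N-t_1)+\ualpha^n\tfrac{t_N-t_1}{1-\ualpha} \le \tfrac{\ualpha(t_N-t_1)}{1-\ualpha}$ uniformly in $n$, for $x$ in the target interval. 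Working with $|t_{i_{k+1}}-t_{i_k}|\le t_N-t_1$ and $|\overline{\alpha}_{\iii|_k}^\jjj|\le\ualpha^k$ lets the paper sidestep any case analysis on the sign of the contraction ratios, and it yields the slightly sharper localization $\pi_\jjj(\iii)\in[t_{i_1}-c,\,t_{i_1}+c]$ around the \emph{first} fixed point, which trivially implies the lemma. Your approach instead verifies that each single map $f_\iii$ preserves $J=[t_1-c,t_N+c]$ (with the orientation-reversing case pinning down $c$ via $c(1-\ualpha)\ge\ualpha(t_N-t_1)$) and then passes to the limit using closedness of $J$ and the point-independence of the limit; this is more modular and makes transparent exactly why the constant $c$ is forced, at the cost of the sign case split. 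Both arguments are complete; note also that your ``alternative'' series estimate is essentially the paper's proof in disguise — a telescoping identity plus a geometric-series bound — only centered at the midpoint $\tfrac{t_1+t_N}{2}$ rather than at $t_{i_1}$.
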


\begin{proof}
  Since
	\begin{align*}
		\overline{f}_{\iii|_n}^\jjj(x) &= f_{\jjj\cdot\iii|_{1}} \circ \cdots \circ f_{\jjj\cdot\iii|_{n}}(x) \\
		&= \sum_{k=1}^{n-1} \overline{\alpha}_{\iii|_{k-1}}^\jjj(1-\alpha_{\jjj\cdot\iii|_{k}})t_{i_k} + \overline{\alpha}_{\iii|_{n}}^\jjj x \\
		&= t_{i_1}+\sum_{k=1}^{n-1} \overline{\alpha}_{\iii|_{k}}^\jjj(t_{i_{k+1}}-t_{i_k}) + \overline{\alpha}_{\iii|_{n}}^\jjj (x-t_{i_n}), \\
	\end{align*}
	we see that
  \begin{equation*}
    |\overline{f}_{\iii|_n}^\jjj(x)-t_{i_1}|\leq\sum_{k=1}^{n-1}\ualpha^k(t_N-t_1)+\ualpha^n\frac{t_N-t_1}{1-\ualpha}\leq\frac{\ualpha(t_N-t_1)}{1-\ualpha}
  \end{equation*}
  for all $x\in [t_1-\frac{\ualpha(t_N-t_1)}{1-\ualpha},t_N+\frac{\ualpha(t_N-t_1)}{1-\ualpha}]$ and for all $n \in \N$.
	Thus, we see that $\pi_{\jjj}(\iii) \leq t_N+\frac{\ualpha(t_N-t_1)}{1-\ualpha}$ for every realization. The lower bound follows similarly.
\end{proof}

By the previous lemma, the self-similar set with random contractions is contained in the interval $I$ defined in the lemma and is therefore uniformly bounded. The sequence $(\overline{f}_{\ell'\ell'\cdots|_n}(I))_{n \in \N}$ thus converges to $t_{\ell'}$ with uniform speed and there exists $p' \in \N$ such that
\begin{equation*}
  t_\ell \notin \overline{f}_{\ell'\ell'\cdots|_{p'}}^{\jjj}(I)
\end{equation*}
for all $\jjj \in \Sigma_*$ independently of the realization of the random variables. Furthermore, by \ref{it:ass3R}, the distribution of $\log \alpha_{\ell}$ is eventually smooth. Therefore, there is $p \in \N$ such that the $p$th self-convolution of the distribution is absolutely continuous with continuous density. This clearly implies that there exists a constant $C > 0$ such that
\begin{equation} \label{eq:dist}
  \mathbb{P}(\overline{\alpha}_{\ell\ell\cdots|_p} \in B(x,r))\leq\min\{1,Cr\}
\end{equation}
for all $x \in \R$ and $r>0$. Write $\kkk = \ell'\ell'\cdots|_{p'}$ and $\lll = \ell\ell\cdots|_p$.

\begin{lemma} \label{thm:tau1-far-away}
  If $X \subset \R$ is a self-similar set with random contractions, then, independently of the realization of the random variables, there exists a constant $c>0$ such that
  \begin{equation*}
    |\pi_{\jjj}(\iii)-t_\ell| \ge c
  \end{equation*}
  for all $\jjj\in\Sigma_*$ and $\iii \in [\kkk] \subset \Sigma$.
\end{lemma}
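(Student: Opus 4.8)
The plan is to locate every point $\pi_{\jjj}(\iii)$ with $\iii\in[\kkk]$ inside the image $\overline{f}_{\kkk}^{\jjj}(I)$ of the bounding interval $I$ supplied by \cref{lem:inint}, and then to observe that, because $\kkk$ is the constant word $\ell'\cdots\ell'$, this image is a short interval clustered around $t_{\ell'}$ which, by the choice of $p'$, stays a definite distance away from $t_\ell$.

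First I would unfold the self-similarity of the canonical projection. Writing $\hhh=\sigma^{p'}\iii$ so that $\iii=\kkk\hhh$, the identities $\iii|_m=\kkk|_m$ for $m\le p'$ and $\jjj\cdot\iii|_{p'+m}=(\jjj\kkk)\cdot\hhh|_m$ let me factor, for every $n>p'$,
\[
  \overline{f}_{\iii|_n}^{\jjj}=\overline{f}_{\kkk}^{\jjj}\circ\overline{f}_{\hhh|_{n-p'}}^{\jjj\kkk}.
\]
Letting $n\to\infty$ and using the continuity of the affine map $\overline{f}_{\kkk}^{\jjj}$ yields $\pi_{\jjj}(\iii)=\overline{f}_{\kkk}^{\jjj}(\pi_{\jjj\kkk}(\hhh))$. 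Since \cref{lem:inint} places $\pi_{\jjj\kkk}(\hhh)\in I$, I conclude that $\pi_{\jjj}(\iii)\in\overline{f}_{\kkk}^{\jjj}(I)$ for every realization, every $\jjj\in\Sigma_*$, and every $\iii\in[\kkk]$.

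Next I would exploit that $\kkk=\ell'\cdots\ell'$ is constant: each factor $f_{\jjj\cdot\kkk|_m}$ in $\overline{f}_{\kkk}^{\jjj}$ has last symbol $\ell'$ and hence fixes $t_{\ell'}$, so their composition is again a homothety centred at $t_{\ell'}$, namely $\overline{f}_{\kkk}^{\jjj}(x)=\overline{\alpha}_{\kkk}^{\jjj}(x-t_{\ell'})+t_{\ell'}$ with $|\overline{\alpha}_{\kkk}^{\jjj}|\le\ualpha^{p'}$. Consequently, with $M=\max_{x\in I}|x-t_{\ell'}|<\infty$, I obtain $\overline{f}_{\kkk}^{\jjj}(I)\subseteq B(t_{\ell'},\ualpha^{p'}M)$, a ball independent of both $\jjj$ and the realization. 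Since $t_\ell\ne t_{\ell'}$ and $p'$ was chosen (through the uniform speed of the shrinking intervals) so that this ball misses $t_\ell$, the value $c=|t_\ell-t_{\ell'}|-\ualpha^{p'}M$ is positive, and then $|\pi_{\jjj}(\iii)-t_\ell|\ge\dist(t_\ell,B(t_{\ell'},\ualpha^{p'}M))=c$ uniformly in $\jjj$, in $\iii\in[\kkk]$, and in the realization.

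The step that demands the most care is the factorisation in the second paragraph — in particular verifying that the continuation superscript correctly becomes $\jjj\kkk$ — together with the structural observation that the constancy of $\kkk$ collapses the composition $\overline{f}_{\kkk}^{\jjj}$ into a single homothety about $t_{\ell'}$. Once these are in place, the remaining distance estimate is an immediate consequence of the ball containment.
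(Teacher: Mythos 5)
Your proof is correct and takes essentially the same approach as the paper's: both establish the containment $\pi_{\jjj}([\kkk])\subseteq\overline{f}_{\kkk}^{\jjj}(I)$ and then invoke the choice of $p'$ to keep this image uniformly away from $t_\ell$. Your write-up additionally spells out the homothety structure of $\overline{f}_{\kkk}^{\jjj}$ about $t_{\ell'}$ and the explicit uniform constant $c=|t_\ell-t_{\ell'}|-\ualpha^{p'}M$, details the paper leaves implicit in its appeal to convergence with ``uniform speed''.
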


\begin{proof}
  Since $t_\ell \notin \overline{f}_\kkk^\jjj(I)$ independently of the realization of the random variables, the compact set $\pi_\jjj([\kkk]) = \overline{f}_{\jjj}^{-1} \circ \pi([\jjj\kkk]) \subseteq \overline{f}_\kkk^\jjj(I)$ is in bounded distance $c>0$ from $t_\ell$.
\end{proof}

Define $\Gamma_n = \JJ_n^\N \subset \Sigma$, where
\begin{equation*}
  \JJ_n = \{\iii\lll\kkk \in \Sigma_{n+p+p'} : \iii \in \Sigma_n \}
\end{equation*}
for all $n \in \N$. Note that for every $\iii,\jjj\in\Gamma_n$ with $\iii\neq\jjj$, there exists a unique $u\in\{1,\ldots,n\}$ such that $\sigma^{|\iii\wedge\jjj|+u}(\iii)|_{p+p'}=\lll\kkk \in \Sigma_{p+p'}$. Let $\mathcal{R}_{\iii,\jjj}$ be the $\sigma$-algebra generated by the random variables
\begin{equation*}
  \{\alpha_{\qqq} : \qqq\in\Sigma_*\}\setminus\{\alpha_{\iii|_{|\iii\wedge\jjj|+u+j}} : j \in \{1,\ldots,p+p'\}\}.
\end{equation*}
The following proposition is the transversality for self-similar sets with random contractions in the line.

\begin{proposition}\label{prop:trans}
  If $X \subset \R$ is a self-similar set with random contractions, then for every $n \in \N$ there exists a constant $C>0$ such that
  \begin{equation*}
    \mathbb{P}(|\pi(\iii)-\pi(\jjj)| < \roo \,|\, \mathcal{R}_{\iii,\jjj}) \le \min\Bigl\{1,C\frac{\roo}{|\overline{\alpha}_{\iii \land \jjj}|}\Bigr\}
  \end{equation*}
  for all $\roo>0$ and $\iii, \jjj \in \Gamma_n$ with $\iii \ne \jjj$.
\end{proposition}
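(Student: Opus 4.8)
The plan is to exhibit a single random ratio on which $\pi(\iii)-\pi(\jjj)$ depends affinely with slope bounded away from $0$ (thanks to \cref{thm:tau1-far-away}), and then to feed that ratio into the bounded-density estimate \eqref{eq:dist}. Put $m=|\iii\wedge\jjj|$. First I would strip the common prefix $\iii\wedge\jjj=\iii|_m$: since $\overline{f}_{\iii|_m}$ is a homothety of ratio $\overline{\alpha}_{\iii|_m}=\overline{\alpha}_{\iii\wedge\jjj}$ and $\pi=\overline{f}_{\iii|_m}\circ\pi_{\iii|_m}\circ\sigma^m$ on $[\iii|_m]$, we get
\begin{equation*}
  \pi(\iii)-\pi(\jjj)=\overline{\alpha}_{\iii\wedge\jjj}\bigl(\pi_{\iii|_m}(\sigma^m\iii)-\pi_{\iii|_m}(\sigma^m\jjj)\bigr).
\end{equation*}
As $\overline{\alpha}_{\iii\wedge\jjj}$ is $\mathcal{R}_{\iii,\jjj}$-measurable (it uses only ratios at nodes of length $<m$), the problem reduces to bounding $\mathbb{P}(|\pi_{\iii|_m}(\sigma^m\iii)-\pi_{\iii|_m}(\sigma^m\jjj)|<\roo/|\overline{\alpha}_{\iii\wedge\jjj}|\mid\mathcal{R}_{\iii,\jjj})$.

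Next I would isolate the free direction using the block $\lll\kkk$ supplied by the choice of $u$: the word $\sigma^m\iii$ consists of $u$ symbols, then $\lll=\ell\cdots\ell$, then $\kkk$. Telescoping $\pi_{\iii|_m}$ first through these $u$ symbols and then through $\lll$, and using that the composition over the $u$ free symbols is a homothety of ratio $\gamma:=\alpha_{\iii|_{m+1}}\cdots\alpha_{\iii|_{m+u}}$ while the $\lll$-part is a homothety fixing $t_\ell$ of ratio $\beta:=\alpha_{\iii|_{m+u+1}}\cdots\alpha_{\iii|_{m+u+p}}$, I obtain
\begin{equation*}
  \pi_{\iii|_m}(\sigma^m\iii)-\pi_{\iii|_m}(\sigma^m\jjj)=a+\gamma(R-t_\ell)\,\beta,
\end{equation*}
where $R=\pi_{\iii|_{m+u+p}}(\sigma^{m+u+p}\iii)$ and $a$ collects the two $\mathcal{R}_{\iii,\jjj}$-measurable terms coming from the free part and from $\pi_{\iii|_m}(\sigma^m\jjj)$. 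Since $\sigma^{m+u+p}\iii\in[\kkk]$, \cref{thm:tau1-far-away} gives $|R-t_\ell|\ge c$, while $|\gamma|\ge\lalpha^{u}\ge\lalpha^{n}$ because $u\le n$; hence the coefficient $\gamma(R-t_\ell)$ of $\beta$ has modulus at least $\lalpha^n c>0$.

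Finally I would invoke smoothness. The terms in $a$ and the factor $\gamma$ involve only ratios along $\iii\wedge\jjj$, along the $u$ free symbols and along $\sigma^m\jjj$, while $R$ involves only ratios at nodes of length $\ge m+u+p$; all of these lie at nodes disjoint from the block $\iii|_{m+u},\ldots,\iii|_{m+u+p-1}$ defining $\beta$. By \ref{it:ass1R} the variable $\beta$ is a product of $p$ independent copies of $\alpha_\ell$, so it is distributed as $\overline{\alpha}_{\ell\ell\cdots|_p}$ and is independent of the pair $(a,\gamma(R-t_\ell))$. Conditioning on that pair, the target event forces $\beta$ into a ball of radius $(\roo/|\overline{\alpha}_{\iii\wedge\jjj}|)/|\gamma(R-t_\ell)|\le\lalpha^{-n}c^{-1}\roo/|\overline{\alpha}_{\iii\wedge\jjj}|$, and \eqref{eq:dist} yields
\begin{equation*}
  \mathbb{P}\bigl(|\pi(\iii)-\pi(\jjj)|<\roo \mid \mathcal{R}_{\iii,\jjj}\bigr)\le\min\Bigl\{1,\,C\lalpha^{-n}c^{-1}\,\frac{\roo}{|\overline{\alpha}_{\iii\wedge\jjj}|}\Bigr\},
\end{equation*}
which is the assertion with a constant depending on $n$.

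I expect the main difficulty to lie in this last conditioning. Although $\beta$ is independent of everything producing $a$ and the slope, the $\sigma$-algebra $\mathcal{R}_{\iii,\jjj}$ also records the contraction ratios of the sibling maps at the very nodes $\iii|_{m+u},\ldots,\iii|_{m+u+p-1}$ defining $\beta$. One must therefore ensure that conditioning on these siblings does not destroy the estimate \eqref{eq:dist} for the conditional law of $\beta$; this is the single point where the independence across distinct nodes in \ref{it:ass1R} and the eventual smoothness in \ref{it:ass3R} have to be combined with care, the remainder being the telescoping identities above.
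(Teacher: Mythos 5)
Your argument is, step by step, the paper's own proof: stripping the common prefix is \cref{eq:trans3}, your identity $\pi_{\iii|_m}(\sigma^m\iii)-\pi_{\iii|_m}(\sigma^m\jjj)=a+\gamma(R-t_\ell)\beta$ is a rearrangement of \cref{eq:trans1}, the bounds $|R-t_\ell|\ge c$ (from \cref{thm:tau1-far-away}) and $|\gamma|\ge\lalpha^u\ge\lalpha^n$ are used identically, and the final appeal to \eqref{eq:dist} is the paper's concluding step. The only divergence is at the end: the paper simply asserts the bound ``relying on the independence of the contraction ratios'', whereas you stop and ask whether conditioning on $\mathcal{R}_{\iii,\jjj}$ --- which contains the sibling ratios $\alpha_{(\iii|_{m+u+j-1})r}$ at the very parents carrying $\beta$ and the $\kkk$-block --- can spoil \eqref{eq:dist} for the conditional law of $\beta$.

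That flagged point is a genuine gap, and it cannot be repaired by a cleverer smoothness argument, because with $\mathcal{R}_{\iii,\jjj}$ as literally defined the stated inequality is false in the generality permitted by \ref{it:ass1R}--\ref{it:ass3R}. Assumption \ref{it:ass1R} allows components within one vector to be almost surely equal, and the paper's own \cref{ex:mod4corner} realizes this: its first-coordinate system has $N=4$, $\alpha_{\qqq 1}=\alpha_{\qqq 3}$, $\alpha_{\qqq 2}=\alpha_{\qqq 4}$, $t_1=t_3=0$, $t_2=t_4=1$, and satisfies \ref{it:ass1R}--\ref{it:ass3R} with $\ell=1$, $\ell'=2$. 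In that system every excluded ratio $\alpha_{\iii|_{m+u+j}}$ coincides almost surely with an included sibling ($r=3$ on the $\lll$-part, $r=4$ on the $\kkk$-part), so $\mathcal{R}_{\iii,\jjj}$ contains all of the randomness and $\mathbb{P}(|\pi(\iii)-\pi(\jjj)|<\roo\,|\,\mathcal{R}_{\iii,\jjj})=\mathds{1}\{|\pi(\iii)-\pi(\jjj)|<\roo\}$ almost surely. Taking $\iii=1\mathtt{w}$ and $\jjj=3\mathtt{w}$ in $\Gamma_n$ with the same tail $\mathtt{w}$ gives $\pi(\iii)-\pi(\jjj)=\alpha_1(\pi_1(\mathtt{w})-\pi_3(\mathtt{w}))$ with $\pi_1(\mathtt{w}),\pi_3(\mathtt{w})$ i.i.d., so this event has positive probability for every $\roo>0$, while the claimed bound is $C\roo<1$ for small $\roo$ (here $\overline{\alpha}_{\iii\wedge\jjj}=1$). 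The correct repair is to shrink the conditioning $\sigma$-algebra: remove from the generating family the whole vectors $(\alpha_{\qqq 1},\ldots,\alpha_{\qqq N})$ at the parents $\qqq=\iii|_{m+u+j-1}$, $j\in\{1,\ldots,p+p'\}$, not merely the single ratios on $\iii$'s path. With that definition your last step closes exactly as you wrote it: letting $\mathcal{G}$ be the $\sigma$-algebra generated by the vectors at all parents other than those of $\beta$, the quantities $a$, $\gamma$, $R$ and $\overline{\alpha}_{\iii\wedge\jjj}$ are $\mathcal{G}$-measurable, while $\beta$ is independent of $\mathcal{G}$ by \ref{it:ass1R} and distributed as $\overline{\alpha}_{\ell\ell\cdots|_p}$; hence \eqref{eq:dist} bounds $\mathbb{P}(\,\cdot\,|\,\mathcal{G})$ by $\min\{1,(C/c\lalpha^n)\roo/|\overline{\alpha}_{\iii\wedge\jjj}|\}$, and the tower property passes this bound to the corrected $\mathcal{R}_{\iii,\jjj}\subseteq\mathcal{G}$ because the bound is measurable with respect to it. This change costs nothing downstream: the quantities required to be $\mathcal{R}_{\iii,\jjj}$-measurable in \cref{prop:transfin} and in \cref{sec:lower-bound} involve only ratios attached to other parents, so the application survives verbatim. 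In short, your proof is the paper's proof, and the difficulty you could not dispatch is not yours to dispatch --- it is a flaw in the formulation of $\mathcal{R}_{\iii,\jjj}$ that the paper's own proof silently steps over.
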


\begin{proof}
  Fix $n \in \N$ and let $\iii,\jjj\in\Gamma_n$ with $\iii\neq\jjj$. For simplicity, write $\iii'=i_1'i_2'\cdots=\sigma^{|\iii\wedge\jjj|}\iii$ and $\jjj'=j_1'j_2'\cdots=\sigma^{|\iii\wedge\jjj|}\jjj$. Notice that $i_1'\neq j_1'$ and $\iii', \jjj' \in \bigcup_{k=0}^{n-1} \sigma^k(\Gamma_n)$. In particular, there exists a unique $k \in \{1,\ldots,n\}$ for which $\iii',\jjj' \in \sigma^{n-k}(\Gamma_n)$ and hence, $\sigma^k\iii'|_{p+p'} = \sigma^k\jjj'|_{p+p'}= \lll\kkk$. Since
  \begin{equation*}
    \pi_{\iii\wedge\jjj}(\iii') = \overline{f}_{\iii'|_k}^{\iii\wedge\jjj}(0) + \overline{\alpha}_{\iii'|_k}^{\iii\wedge\jjj}((1-\overline{\alpha}^{\iii\wedge\jjj\cdot\iii'|_k}_{\lll})t_\ell + \overline{\alpha}^{\iii\wedge\jjj\cdot\iii'|_k}_{\lll}\pi_{\iii\wedge\jjj\cdot\iii'|_k\cdot\lll}(\sigma^{k+p}\iii')),
  \end{equation*}
  we see that
  \begin{equation} \label{eq:trans1}
    \overline{\alpha}^{\iii\wedge\jjj\cdot\iii'|_k}_{\lll}(\pi_{\iii\wedge\jjj\cdot\iii'|_k\cdot\lll}(\sigma^{k+p}\iii')-t_\ell) = \frac{\pi_{\iii\wedge\jjj}(\iii') - \overline{f}_{\iii'|_k}^{\iii\wedge\jjj}(0)}{\overline{\alpha}_{\iii'|_k}^{\iii\wedge\jjj}} - t_\ell
  \end{equation}
  almost surely. Recall that, by \cref{thm:tau1-far-away}, there is a constant $c>0$ such that
  \begin{equation} \label{eq:trans2}
    |\pi_{\iii\wedge\jjj\cdot\iii'|_k}(\sigma^{k+p}\iii')-t_\ell| > c
  \end{equation}
  almost surely. By \cref{eq:defnatproj11}, we clearly have $\pi(\iii)=f_{i_1}(\pi_{i_1}(\sigma\iii))$, and more generally, $\pi(\iii)=\overline{f}_{\iii|_n}(\pi_{\iii|_n}(\sigma^n\iii))$ for all $n \in \N$. Hence,
  \begin{equation} \label{eq:trans3}
    \pi(\iii)-\pi(\jjj)=\overline{\alpha}_{\iii\wedge\jjj}(\pi_{\iii\wedge\jjj}(\sigma^{|\iii\wedge\jjj|}\iii)-\pi_{\iii\wedge\jjj}(\sigma^{|\iii\wedge\jjj|}\jjj))
  \end{equation}
  for all $\iii,\jjj \in \Sigma$. Since $|\alpha_\iii|\ge \lalpha$ for all $\iii \in \Sigma_*$, we thus see by recalling \cref{eq:trans3}, \cref{eq:trans1}, and \cref{eq:trans2} that
  \begin{align*}
    |\pi&(\iii)-\pi(\jjj)| < \roo \quad\Leftrightarrow\quad |\pi_{\iii\wedge\jjj}(\iii')-\pi_{\iii\wedge\jjj}(\jjj')| < \frac{\roo}{|\overline{\alpha}_{\iii\wedge\jjj}|} \\
    &\Leftrightarrow\quad \pi_{\iii\wedge\jjj}(\iii') \in B\Bigl(\pi_{\iii\wedge\jjj}(\jjj'),\frac{\roo}{|\overline{\alpha}_{\iii\wedge\jjj}|}\Bigr) \\
    &\Leftrightarrow\quad \overline{\alpha}^{\iii\wedge\jjj\cdot\iii'|_k}_{\lll}(\pi_{\iii\wedge\jjj\cdot\iii'|_k\cdot\lll}(\sigma^{k+m}\iii')-t_\ell) \in B\biggl( \frac{\pi_{\iii\wedge\jjj}(\jjj') - \overline{f}_{\iii'|_k}^{\iii\wedge\jjj}(0)}{\overline{\alpha}_{\iii'|_k}^{\iii\wedge\jjj}} - t_\ell,\frac{\roo}{|\overline{\alpha}_{\iii\wedge\jjj\cdot\iii'|_k}|} \biggr) \\
    &\Rightarrow\quad \overline{\alpha}^{\iii\wedge\jjj\cdot\iii'|_k}_{\lll} \in B\biggl(\frac{\pi_{\iii\wedge\jjj}(\iii') - \overline{f}_{\iii'|_k}^{\iii\wedge\jjj}(0)- t_\ell\overline{\alpha}_{\iii'|_k}^{\iii\wedge\jjj}}{\overline{\alpha}_{\iii'|_k}^{\iii\wedge\jjj}(\pi_{\iii\wedge\jjj\cdot\iii'|_k}(\sigma^{k+1}\iii')-t_\ell)},\frac{\roo}{|\overline{\alpha}_{\iii\wedge\jjj}|c\lalpha^n} \biggr)
  \end{align*}
  almost surely. Relying on the independence of the contraction ratios, the fact that $\overline{\alpha}^{\iii\wedge\jjj\cdot\iii'|_k}_{\lll}$ has the same distribution as $\overline{\alpha}_{\lll}$, and \cref{eq:dist}, we conclude
  \begin{equation*}
    \mathbb{P}(|\pi(\iii)-\pi(\jjj)| < \roo \,|\, \mathcal{R}_{\iii,\jjj}) \le \frac{2C}{c\lalpha^n} \frac{\roo}{|\overline{\alpha}_{\iii\wedge\jjj}|}
  \end{equation*}
  as claimed.
\end{proof}

\subsection{Transversality for random self-affine sponges} \label{sec:space}
We generalize the transversality proven for random self-similar sets in \cref{sec:line} to include random self-affine sponges. Let $\Pi(\Sigma) \subset \R^d$ be a self-affine sponge with random contractions, where
\begin{equation*}
  \Pi(\iii) = \sum_{n=1}^\infty A_{\iii|_0} \cdots A_{\iii|_{n-1}} (I-A_{\iii|_n})t_{i_n}.
\end{equation*}
Let $\proj_k \colon \R^d \to \R$, $\proj_k(x_1,\ldots,x_d) = x_k$, be the \emph{orthogonal projection} onto the $k$th coordinate axis. Define for each $\jjj \in \Sigma_*$ the \emph{canonical projection} $\pi_\jjj^k \colon \Sigma \to \R$ by setting
\begin{equation} \label{eq:defnatproj1}
  \pi_\jjj^k(\iii) = \sum_{n=1}^\infty \alpha_{\jjj\cdot\iii|_1}^{(k)} \cdots \alpha_{\jjj\cdot\iii|_{n-1}}^{(k)} (1-\alpha_{\jjj\cdot\iii|_n}^{(k)}) t_{i_n} \cdot e_k
\end{equation}
for all $\jjj \in \Sigma_*$ and $\iii \in \Sigma$. Writing $\pi^k = \pi_\varnothing^k$, we see that $\proj_k \circ \,\Pi = \pi^k$ for all $k \in \{1,\ldots,d\}$ and the coordinate functions of $\Pi = (\pi^1,\ldots,\pi^d)$ define a self-similar set $\pi^k(\Sigma) \subset \R$ with random contractions.

For each $k \in \{1,\ldots,d\}$ let $\ell_k, \ell_k' \in \{1,\ldots,N\}$ be given by \ref{it:ass3}. By \cref{lem:inint}, the self-similar set $\pi^k(\Sigma)$ with random contractions is uniformly bounded. Hence, by \cref{thm:tau1-far-away}, there exist $p_k' \in \N$ and a constant $c>0$ such that,
\begin{equation*}
  |\pi_\jjj^k(\kkk_k \iii)-t_{\ell_k}\cdot e_k| \ge c
\end{equation*}
for all $\jjj \in \Sigma_*$ and $\iii \in \Sigma$, where $\kkk_k = \ell_k'\cdots\ell_k'|_{p_k'}$. Since the distribution of $\log \alpha_{\ell_k}^{(k)}$ is eventually smooth, there is $p_k \in \N$ such that the $p_k$th self-convolution of the distribution is absolutely continuous with continuous density. Therefore, for every $k \in \{1,\ldots,d\}$ there exists a constant $C_k > 0$ such that
\begin{equation*}
  \mathbb{P}(\overline{\alpha}_{\lll_k} \in B(x,r))\leq\min\{1,C_kr\}
\end{equation*}
for all $x \in \R$ and $r>0$, where $\lll_k = \ell_k\ell_k\cdots|_{p_k}$. 

Write $p = p_1+\cdots+p_d$, $p' = p_1'+\cdots+p_d'$, and define $\Gamma_n = \JJ_n^\N \subset \Sigma$, where
\begin{equation*}
  \JJ_n = \{\iii\lll_1\kkk_1\cdots\lll_d\kkk_d \in \Sigma_{n+p+p'} : \iii \in \Sigma_n \}
\end{equation*}
for all $n \in \N$. Note that for every $\iii,\jjj\in\Gamma_n$ with $\iii\neq\jjj$, there exists a unique $u\in\{1,\ldots,n\}$ such that $\sigma^{|\iii\wedge\jjj|+u}(\iii)|_{p+p'}=\lll_1\kkk_1\cdots\lll_d\kkk_d \in \Sigma_{p+p'}$. Let $\mathcal{R}_{\iii,\jjj}$ be the $\sigma$-algebra generated by the random variables
\begin{equation*}
  \{\alpha_{\qqq}^{(k)} : \qqq\in\Sigma_* \text{ and } k\in\{1,\ldots,d\}\}\setminus\{\alpha_{\iii|_{|\iii\wedge\jjj|+u+j}}^{(k)} : j \in \{1,\ldots,p+p'\} \text{ and } k \in \{1,\ldots,d\}\}.
\end{equation*}
Similarly, let $\mathcal{R}_{\iii,\jjj}^{(k)}$ be the $\sigma$-algebra generated by the random variables
\begin{equation*}
  \{\alpha_{\qqq}^{(k)} : \qqq\in\Sigma_*\}\setminus\{\alpha_{\iii|_{|\iii\wedge\jjj|+u+j}}^{(k)} : j \in \{1,\ldots,p+p'\}\}
\end{equation*}
for all $k \in \{1,\ldots,d\}$. Note that the $\sigma$-algebras $\mathcal{R}_{\iii,\jjj}^{(1)},\ldots,\mathcal{R}_{\iii,\jjj}^{(d)}$ are clearly independent.
The following proposition is the transversality for self-affine sponges with random contractions.

\begin{proposition} \label{prop:transfin}
  If $X \subset \R^d$ is a self-affine sponge with random contractions, then for every $n \in \N$ there exists a constant $C>0$ such that
  \begin{equation*}
    \mathbb{P}(|\Pi(\iii)-\Pi(\jjj)| < \roo \,|\, \mathcal{R}_{\iii,\jjj}) \le \prod_{k=1}^d \min\Bigl\{1,C\frac{\roo}{|\overline{\alpha}_{\iii \land \jjj}^{(k)}|}\Bigr\}
  \end{equation*}
  for all $\roo>0$ and $\iii, \jjj \in \Gamma_n$ with $\iii \ne \jjj$.
\end{proposition}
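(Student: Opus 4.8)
The plan is to reduce the $d$-dimensional estimate to the one-dimensional transversality of \cref{prop:trans} applied separately to each coordinate projection $\pi^k = \proj_k \circ \Pi$, and then to exploit the independence of the coordinate contraction ratios to turn the resulting estimates into a product. Since the Euclidean norm dominates every coordinate, the inclusion
\[
  \{|\Pi(\iii)-\Pi(\jjj)| < \roo\} \subseteq \bigcap_{k=1}^d \{|\pi^k(\iii)-\pi^k(\jjj)| < \roo\}
\]
holds for every realization, so writing $E_k = \{|\pi^k(\iii)-\pi^k(\jjj)| < \roo\}$ it suffices to bound $\mathbb{P}(\bigcap_{k=1}^d E_k \mid \mathcal{R}_{\iii,\jjj})$.

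The next step is to factorize this conditional probability over the coordinates. Let $\mathcal{A}_k$ denote the $\sigma$-algebra generated by the entire $k$th coordinate family $\{\alpha_\qqq^{(k)} : \qqq \in \Sigma_*\}$. By \ref{it:ass1} the $\sigma$-algebras $\mathcal{A}_1,\ldots,\mathcal{A}_d$ are independent, each $E_k$ is $\mathcal{A}_k$-measurable, and $\mathcal{R}_{\iii,\jjj}^{(k)} \subseteq \mathcal{A}_k$ with $\mathcal{R}_{\iii,\jjj} = \bigvee_{k=1}^d \mathcal{R}_{\iii,\jjj}^{(k)}$. A standard property of independent $\sigma$-algebras then yields both
\[
  \mathbb{P}\Bigl(\bigcap_{k=1}^d E_k \,\Big|\, \mathcal{R}_{\iii,\jjj}\Bigr) = \prod_{k=1}^d \mathbb{P}(E_k \mid \mathcal{R}_{\iii,\jjj})
  \quad\text{and}\quad
  \mathbb{P}(E_k \mid \mathcal{R}_{\iii,\jjj}) = \mathbb{P}(E_k \mid \mathcal{R}_{\iii,\jjj}^{(k)}),
\]
the latter because $\bigvee_{k'\neq k}\mathcal{R}_{\iii,\jjj}^{(k')}$ is independent of the pair $(E_k,\mathcal{R}_{\iii,\jjj}^{(k)})$ and hence contributes no additional information about $E_k$.

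It remains to bound each factor $\mathbb{P}(E_k \mid \mathcal{R}_{\iii,\jjj}^{(k)})$ by the one-dimensional argument. The coordinate projection $\pi^k(\Sigma)$ is a self-similar set with random contractions on the line, and although $\Gamma_n$ is built from the concatenated block $\lll_1\kkk_1\cdots\lll_d\kkk_d$ rather than a single $\lll\kkk$, the computation in the proof of \cref{prop:trans} uses, for coordinate $k$, only the smoothing block $\lll_k$ — whose product $\overline{\alpha}_{\lll_k}^{(k)}$ satisfies the corresponding absolute-continuity estimate — together with the block $\kkk_k$, which by the coordinate version of \cref{thm:tau1-far-away} keeps $\pi^k$ uniformly far from $t_{\ell_k}\cdot e_k$. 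Isolating $\overline{\alpha}_{\lll_k}^{(k)}$ exactly as in \cref{eq:trans1}--\cref{eq:trans2} and invoking the smoothing estimate gives
\[
  \mathbb{P}(E_k \mid \mathcal{R}_{\iii,\jjj}^{(k)}) \le \min\Bigl\{1, C_k \frac{\roo}{|\overline{\alpha}_{\iii\wedge\jjj}^{(k)}|}\Bigr\}.
\]
Multiplying these estimates and setting $C = \max_{1\le k\le d} C_k$ produces the claimed product bound. I expect the main obstacle to be the measure-theoretic bookkeeping in the factorization step: one must confirm that conditioning on the mixed $\sigma$-algebra $\mathcal{R}_{\iii,\jjj}$ leaves, in each coordinate, exactly the free block $\{\alpha^{(k)}_{\iii|_{|\iii\wedge\jjj|+u+j}} : j \in \{1,\ldots,p+p'\}\}$ as residual randomness, and that these residual blocks are independent across $k$ — which is precisely what legitimizes both the product factorization and the reduction to $\mathcal{R}_{\iii,\jjj}^{(k)}$.
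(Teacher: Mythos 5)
Your proposal is correct and follows essentially the same route as the paper: reduce to the coordinate projections via the inclusion $\{|\Pi(\iii)-\Pi(\jjj)|<\roo\}\subseteq\bigcap_{k=1}^d\{|\pi^k(\iii)-\pi^k(\jjj)|<\roo\}$, factor the conditional probability using the independence of the coordinate families \ref{it:ass1}, and invoke the one-dimensional transversality of \cref{prop:trans} coordinate-wise, noting that the sponge version of $\Gamma_n$ still contains the blocks $\lll_k\kkk_k$ needed for each coordinate. Your extra bookkeeping on the factorization over independent $\sigma$-algebras and on why the concatenated block $\lll_1\kkk_1\cdots\lll_d\kkk_d$ suffices simply makes explicit what the paper compresses into its remark that the sponge $\Gamma_n$ is ``more restrictive'' than the one of \cref{sec:line}.
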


\begin{proof}
  Notice first that if $|\Pi(\iii)-\Pi(\jjj)| < \roo$, then
  \begin{equation*}
    |\pi^k(\iii)-\pi^k(\jjj)| < \roo
  \end{equation*}
  for all $k \in \{1,\ldots,d\}$. Since the coordinates $\pi^k$ are independent, we thus have
  \begin{equation*}
    \mathbb{P}(|\Pi(\iii)-\Pi(\jjj)| < \roo \,|\, \mathcal{R}_{\iii,\jjj}) \le \prod_{k=1}^d \mathbb{P}(|\pi^k(\iii)-\pi^k(\jjj)| < \roo \,|\, \mathcal{R}_{\iii,\jjj}^{(k)}).
  \end{equation*}
  Observe that the definition of $\Gamma_n$ is more restrictive than the one introduced in \cref{sec:line}. As a result, the transversality condition in \cref{prop:trans} holds automatically for the coordinate sets $\pi^k(\Sigma)$. The claim then follows directly from \cref{prop:trans}.
\end{proof}

\section{Dimension of random self-affine sponges}

Using the tools developed in \cref{sec:measure} and \cref{sec:trans}, we investigate self-affine sponges $X \subset \R^d$ with random contractions. We prove the main result, \cref{thm:main}, by establishing the dimension upper bound in \cref{sec:upper-bound}, the lower bound in \cref{sec:lower-bound}, and the positivity of the Lebesgue measure in \cref{sec:positive-lebesgue}.

\subsection{Dimension upper bound} \label{sec:upper-bound}
If $X \subset \R^d$ is a self-affine sponge with random contractions, then we let $\varphi^s(\iii)=\varphi^s(A_{\iii})$ be as in \cref{eq:permutation-svf} and write
\begin{align*}
  \overline{\alpha}_\iii^{(k)} &= \alpha_{\iii|_1}^{(k)} \alpha_{\iii|_2}^{(k)} \cdots \alpha_{\iii|_{|\iii|-1}}^{(k)} \alpha_{\iii}^{(k)}, \\
  \overline{\fii}^s_\sigma(\iii) &= \fii^s_\sigma(\iii|_1) \cdots \fii^s_\sigma(\iii|_{|\iii|-1}) \fii^s_\sigma(\iii),
\end{align*}
for all $\iii \in \Sigma_*$ and $k \in \{1,\ldots,d\}$. We also define
\begin{equation} \label{eq:psi-permutation}
  \psi^s(\iii) = \max_{\sigma} \overline{\fii}^s_\sigma(\iii)
\end{equation}
for all $\iii \in \Sigma_*$ and $s \ge 0$. The upper bound for the dimension is given by the following lemma.

\begin{proposition} \label{lem:ub}
  If $X \subset \R^d$ is a self-affine sponge with random contractions, then we almost surely have
  \begin{equation*}
    \udimm(X) \le \min\{d,s_0\}.
  \end{equation*}
\end{proposition}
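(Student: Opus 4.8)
The plan is to control the expected covering number $\E(N_r(X))$ at each scale $r$ and then upgrade this to an almost sure statement. Since $X\subseteq\R^d$ is bounded we always have $\udimm(X)\le d$, so it suffices to treat the case $s_0<d$ and to prove that $\udimm(X)\le s$ almost surely for every rational $s\in(s_0,d]$; intersecting these countably many almost sure events and letting $s\downarrow s_0$ yields the claim. Writing $P(s)=\max_\sigma\sum_{i=1}^N\E(\fii_\sigma^s(A_i))$, the assumption $|\alpha_\iii^{(k)}|\in[\lalpha,\ualpha]$ with $\ualpha<1$ makes each $s\mapsto\E(\fii_\sigma^s(A_i))$ continuous and strictly decreasing, so $P$ is continuous and strictly decreasing with $P(s_0)=1$; hence $P(s)<1$ for every $s>s_0$.

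The first ingredient is an expected singular value estimate. Because the matrices are diagonal, the factors of $\overline{\fii}^s_\sigma(\iii)$ telescope into the cumulative products $|\overline{\alpha}_\iii^{(k)}|$, so that $\psi^s(\iii)=\varphi^s(A_\iii)$ and in particular $\varphi^s(A_\iii)\le\sum_\sigma\overline{\fii}^s_\sigma(\iii)$. Using the independence across distinct words and coordinates from \ref{it:ass1} and the identical distribution from \ref{it:ass1b}, the expectation of $\overline{\fii}^s_\sigma(\iii)$ factorizes over the letters of $\iii$, which gives $\sum_{\iii\in\Sigma_n}\E(\overline{\fii}^s_\sigma(\iii))=\bigl(\sum_{i=1}^N\E(\fii^s_\sigma(A_i))\bigr)^n$. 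Summing over the $d!$ permutations yields
\[
  \E\Bigl(\sum_{\iii\in\Sigma_n}\varphi^s(A_\iii)\Bigr)\le d!\,P(s)^n ,
\]
so that, since $P(s)<1$,
\[
  \E\Bigl(\sum_{\iii\in\Sigma_*}\varphi^s(A_\iii)\Bigr)\le\frac{d!}{1-P(s)}<\infty .
\]

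The second ingredient is a scale-adapted cover. Fix $s\in(s_0,d]$, put $m=\lceil s\rceil-1$, and for $\iii\in\Sigma_*$ let $\beta_{m+1}(\iii)$ be the $(m+1)$-th largest of $|\overline{\alpha}_\iii^{(1)}|,\ldots,|\overline{\alpha}_\iii^{(d)}|$. By the min--max description of order statistics, and since each coordinate product decreases along words, $\beta_{m+1}$ decreases along words, so for every realization
\[
  \Xi_r=\{\iii\in\Sigma_*:\beta_{m+1}(\iii)\le r<\beta_{m+1}(\iii|_{|\iii|-1})\}
\]
is a finite antichain meeting every infinite word, whence $X\subseteq\bigcup_{\iii\in\Xi_r}E_\iii$ with $E_\iii=f_{\iii|_1}\circ\cdots\circ f_{\iii|_{|\iii|}}(B)$. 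Because all ratios lie in $[\lalpha,\ualpha]$, each $\iii\in\Xi_r$ has $\beta_{m+1}(\iii)\in(\lalpha r,r]$, so the box $E_\iii$ (whose side lengths are comparable to the $|\overline{\alpha}_\iii^{(k)}|$) has at most $m$ sides exceeding $r$, each of those being at least $\lalpha r$. A direct count of $r$-balls then gives $N_r(E_\iii)\le C\varphi^s(A_\iii)r^{-s}$ with $C$ depending only on $\lalpha,s,d$. Summing and using $\Xi_r\subseteq\Sigma_*$ pointwise, we obtain $N_r(X)\le Cr^{-s}\sum_{\iii\in\Sigma_*}\varphi^s(A_\iii)$, and taking expectations together with the estimate above yields $\E(N_r(X))\le C'r^{-s}$ with $C'$ independent of $r$.

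Finally, for each rational $u\in(s_0,d]$ choose $s\in(s_0,u)$ and set $\eps=u-s>0$. Along $r_j=2^{-j}$, Markov's inequality gives $\mathbb{P}(N_{r_j}(X)>r_j^{-s-\eps})\le C'r_j^{\eps}$, which is summable, so by Borel--Cantelli we almost surely have $N_{r_j}(X)\le r_j^{-s-\eps}=r_j^{-u}$ for all large $j$; comparing scales $r\in[r_{j+1},r_j]$ then gives $\udimm(X)\le u$ almost surely. Intersecting over rational $u>s_0$ and letting $u\downarrow s_0$ completes the argument, together with the trivial bound $\udimm(X)\le d$. I expect the main obstacle to be the third step: organizing the anisotropic boxes into a scale-$r$ cover whose cardinality is governed by $\varphi^s$ rather than by the far larger branch count, which is exactly what the order-statistic antichain $\Xi_r$ and the estimate $N_r(E_\iii)\le C\varphi^s(A_\iii)r^{-s}$ accomplish, while the crude domination $\sum_{\Xi_r}\varphi^s\le\sum_{\Sigma_*}\varphi^s$ keeps the random antichain sum controllable in expectation.
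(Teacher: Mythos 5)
Your proposal is correct and follows essentially the same route as the paper's proof: a stopping-time antichain at scale $r$ determined by the $\lceil s\rceil$-th largest cumulative coordinate contraction, a per-cylinder ball count of order $\varphi^s(A_\iii)r^{-s}$, crude domination of the random antichain sum by a deterministic sum whose expectation factorizes over letters via the independence and identical-distribution assumptions, and Markov plus Borel--Cantelli along a geometric sequence of scales. The only notable (but minor) difference is the bookkeeping of the summability: you sum over all of $\Sigma_*$ and use $P(s)<1$ to get the convergent geometric series $\sum_n d!\,P(s)^n$, whereas the paper restricts the domination to words of length in $[m_n,n]$ and extracts the decay factor $\ualpha^{m_n(s-s_0)}$ from the bound $\psi^s(\iii)\le\ualpha^{|\iii|(s-s_0)}\psi^{s_0}(\iii)$ together with $\mathbb{E}\bigl(\sum_{\iii\in\Sigma_k}\psi^{s_0}(\iii)\bigr)\le d!$.
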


\begin{proof}
  We may assume without loss of generality that $s_0<d$. Fix $s \in (0,\infty) \setminus \N$ such that $s>s_0$ and let $\ell \in \N$ be such that $\ell-1 < s < \ell\leq d$. Let $\sigma_\iii$ denote the permutation which attains the maximum in \cref{eq:psi-permutation}. Observe that each set $\pi([\iii])$ can be covered by at most
  \begin{equation*}
   2\frac{|\overline{\alpha}_\iii^{(\sigma_\iii(1))}|}{|\overline{\alpha}_\iii^{(\sigma_\iii(\ell))}|} \cdots \frac{|\overline{\alpha}_\iii^{(\sigma_\iii(\ell-1))}|}{|\overline{\alpha}_\iii^{(\sigma_\iii(\ell))}|}
  \end{equation*}
  many balls of radius $\sqrt{d}|\overline{\alpha}_\iii^{(\sigma_\iii(\ell))}|$. Let $n \in \N$ and choose an integer $m_n \le n$ such that $\lalpha^{m_n}\leq \ualpha^n<\lalpha^{m_n-1}$. Define
  \begin{equation*}
    C_n=\{\iii\in\Sigma_*:|\overline{\alpha}_{\iii}^{(\sigma_\iii(\ell))}|\leq\ualpha^n<|\overline{\alpha}_{\iii|_k}^{(\sigma_{\iii|_k}(\ell))}|\text{ for all }k \in \{1,\ldots,|\iii|-1\}\}
  \end{equation*}
  and observe that $C_n$ is a random cover for $\Sigma$ such that $|\iii|\in\{m_n,\ldots,n\}$ for all $\iii \in C_n$. Thus, each set $\pi([\iii])$, $\iii \in C_n$, can be covered by at most
  \begin{equation*}
  	2\frac{|\overline{\alpha}_\iii^{(\sigma_\iii(1))}|}{|\overline{\alpha}_\iii^{(\sigma_\iii(\ell))}|} \cdots \frac{|\overline{\alpha}_\iii^{(\sigma_\iii(\ell-1))}|}{|\overline{\alpha}_\iii^{(\sigma_\iii(\ell))}|}
  \end{equation*}
  many balls with radius $\sqrt{d}\ualpha^n$, and so,
  \begin{equation*}
    N_{\sqrt{d}\ualpha^n}(X)\leq 2\sum_{\iii\in C_n}\frac{|\overline{\alpha}_\iii^{(\sigma_\iii(1))}|}{|\overline{\alpha}_\iii^{(\sigma_\iii(\ell))}|} \cdots \frac{|\overline{\alpha}_\iii^{(\sigma_\iii(\ell-1))}|}{|\overline{\alpha}_\iii^{(\sigma_\iii(\ell))}|}.
  \end{equation*}
  Let $A_n$ denote the upper bound defined above. Since $|\overline{\alpha}_{\iii}^{(\sigma_\iii(\ell))}|\le \ualpha^n$ for $\iii\in C_n$, we have
  \begin{equation*}
    A_n\ualpha^{ns} \le 2\sum_{\iii\in C_n}\psi^s(\iii).
  \end{equation*}
  Our objective is to prove that
  \begin{equation*}
    \mathbb{P}( A_n > \ualpha^{-ns} \text{ for infinitely many } n ) = 0.
  \end{equation*}
  Since $s > s_0$ is arbitrary, this establishes the proposition, as it implies the desired decay of $A_n$ in \cref{eq:def-minkowski} for all $s > s_0$. By the Borel-Cantelli lemma, it is enough to show that $\sum_{n=1}^\infty\mathbb{P}(A_n>\ualpha^{-ns})<\infty$. By the independence \ref{it:ass1} and the identical distribution \ref{it:ass1b}, we see that
  \begin{align*}
  	\mathbb{P}(A_n\ualpha^{ns}>1)&\le\mathbb{P}\biggl(\frac12<\sum_{\iii\in C_n}\psi^s(\iii)\biggr)\le\mathbb{P}\biggl(\frac12<\sum_{k=m_n}^n\sum_{\iii\in\Sigma_k}\psi^s(\iii)\biggr)\\ 
    &\le 2\,\mathbb{E}\biggl( \sum_{k=m_n}^n\sum_{\iii\in\Sigma_k} \psi^s(\iii) \biggr)\le \ualpha^{m_n(s-s_0)}\sum_{k=m_n}^n\mathbb{E}\biggl( \sum_{\iii \in \Sigma_k} \psi^{s_0}(\iii) \biggr)\\ 
    &\leq \ualpha^{m_n(s-s_0)}\sum_{k=m_n}^n\mathbb{E}\biggl(\sum_{\iii \in \Sigma_k} \sum_\sigma \overline{\fii}_\sigma^{s_0}(\iii)\biggr) \\ 
    &= \ualpha^{m_n(s-s_0)}\sum_{k=m_n}^n \sum_\sigma\biggl( \sum_{i=1}^N \mathbb{E}(\fii_\sigma^{s_0}(i)) \biggr)^k \\
    &\le \ualpha^{m_n(s-s_0)} d!\sum_{k=m_n}^n \biggl( \max_{\sigma} \sum_{i=1}^N \mathbb{E}(\fii_\sigma^{s_0}(i)) \biggr)^k=n\ualpha^{m_n(s-s_0)} d!,
  \end{align*}
  where $\sum_\sigma$ denotes the sum over all permutations $\sigma$. Since this sequence forms a convergent series, the claim follows. 
\end{proof}

\subsection{Dimension lower bound} \label{sec:lower-bound}
By the independence \ref{it:ass1} and the identical distribution \ref{it:ass1b}, we see that
\begin{equation*}
  \sum_{\iii\in \JJ_n}\mathbb{E}(\overline{\fii}^s_\sigma(\iii)) = \biggl(\sum_{i=1}^N\mathbb{E}(\fii^s_\sigma(i))\biggr)^n \prod_{k=1}^d\mathbb{E}(\fii_\sigma^s(\ell_k'))^{p_k'}\mathbb{E}(\fii_\sigma^s(\ell_k))^{p_k}.
\end{equation*}
If $s_n$ is the unique solution of
\begin{equation} \label{eq:approx}
  \max_{\sigma}\sum_{\iii\in \JJ_n}\mathbb{E}(\overline{\fii}^s_\sigma(\iii))=1,
\end{equation}
then we clearly have $s_n\to s_0$ as $n\to\infty$. Let $\sigma$ be a permutation for which the maximum in \cref{eq:approx} is attained and write $q_n = n+p+p'$ to denote the length of elements in $\JJ_n$. For each $\iii \in \bigcup_{k=1}^\infty \JJ_n^k$ we set
\begin{equation*}
  p_{\iii} = \frac{\overline{\fii}^{s_n}_\sigma(\iii)}{\overline{\fii}^{s_n}_\sigma(\iii|_{|\iii|-q_n})} = \fii^{s_n}_\sigma(\iii|_{|\iii|-q_n+1})\cdots\fii^{s_n}_\sigma(\iii|_{|\iii|-1})\fii^{s_n}_\sigma(\iii).
\end{equation*}
It is straightforward to see that $\{p_{\iii} : \iii \in \bigcup_{k=1}^\infty \JJ_n^k\}$ satisfies the properties \ref{it:dist}--\ref{it:expect}. Let $\mu_n$ be the random measure given by \cref{thm:measure} supported on $\Gamma_n$. The measure $\mu_n$ is defined by
\begin{equation} \label{eq:def-of-meas}
\begin{split}
  \mu_n([\iii]) &= p_{\iii|_{q_n}}p_{\iii|_{2q_n}}\cdots p_{\iii|_{kq_n}} X^\iii \\
  &= \overline{\fii}^{s_n}_\sigma(\iii|_{q_n}) \frac{\overline{\fii}^{s_n}_\sigma(\iii|_{2q_n})}{\overline{\fii}^{s_n}_\sigma(\iii|_{2q_n-q_n})} \cdots \frac{\overline{\fii}^{s_n}_\sigma(\iii|_{kq_n})}{\overline{\fii}^{s_n}_\sigma(\iii|_{kq_n-q_n})}X^\iii = \overline{\fii}_\sigma^{s_n}(\iii)X^{\iii}
\end{split}
\end{equation}
for all $\iii \in \JJ_n^k$ and $k \in \N$, where the $L^2$-random variable $X^\iii$ is introduced in the proof of \cref{thm:measure}. Furthermore, $\mu_n$ can naturally be extended to $\Sigma$ as well as $X^\iii$ can be extended to $\Sigma_*$. Let $\mathcal{F}_n$ be the $\sigma$-algebra generated by the random variables $\{\alpha_{\iii}^{(k)} : |\iii| \leq n \text{ and } k\in\{1,\ldots,d\}\}$. Observe that $X^{\iii}$ is independent of $\mathcal{F}_{|\iii|}$.

Recall that, by \cref{eq:defnatproj1}, we have
\begin{equation*}
  \pi_\jjj^k(\iii)
  = t_{i_1}\cdot e_k + \sum_{j=1}^{\infty} \alpha_{\jjj\cdot\iii|_1}^{(k)} \cdots \alpha_{\jjj\cdot\iii|_j}^{(k)} (t_{i_{j+1}}-t_{i_j}) \cdot e_k
\end{equation*}
for all $\jjj \in \Sigma_*$ and $\iii \in \Sigma$. 
Therefore, as in \cref{eq:trans3},
\begin{align*}
  \pi_\jjj^k(\iii) - \pi_\jjj^k(\iii|_n \cdot\ell\ell\cdots)
  &= \alpha_{\jjj\cdot\iii|_1}^{(k)} \cdots \alpha_{\jjj\cdot\iii|_{n-1}}^{(k)} (\pi_{\jjj\cdot\iii|_n}^k(\sigma^n\iii)-\pi_{\jjj\cdot\iii|_n}^k(\ell\ell\cdots)) \\
  &= \alpha_{\jjj\cdot\iii|_1}^{(k)} \cdots \alpha_{\jjj\cdot\iii|_{n-1}}^{(k)} (\pi_{\jjj\cdot\iii|_n}^k(\sigma^n\iii)-t_{\ell}\cdot e_k).
\end{align*}
Let $\conv(\pi^k(\Sigma))$ be the convex hull of $\pi^k(\Sigma)$; recall \cref{lem:inint}. We thus have
\begin{equation} \label{eq:approx2}
  |\pi_\jjj^k(\iii) - \pi_\jjj^k(\iii|_n \cdot\ell\ell\cdots)| \le \ualpha^n \diam(\conv(\pi^k(\Sigma)))
\end{equation}

\begin{proposition}\label{lem:lb}
  If $X \subset \R^d$ is a self-affine sponge with random contractions, then we almost surely have
  \begin{equation*}
    \dimh(X) \ge \min\{d,s_0\}.
  \end{equation*}
\end{proposition}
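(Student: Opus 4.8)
\subsection*{Proof proposal}

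The plan is to apply the potential-theoretic (energy) method to the pushforward $\nu = \Pi_*\mu_n$ of the random measure $\mu_n$ from \cref{eq:def-of-meas}. Recall from \cref{thm:measure} that $0 < \mu_n(\Sigma) < \infty$ almost surely, so $\nu$ is a nonzero finite measure supported on $X = \Pi(\Sigma)$. It suffices to show that for every $n$ and every $s < \min\{d, s_n\}$ the expected $s$-energy
\[
  \mathbb{E}(I_s(\nu)) = \mathbb{E}\biggl(\iint |\Pi(\iii)-\Pi(\jjj)|^{-s}\,d\mu_n(\iii)\,d\mu_n(\jjj)\biggr)
\]
is finite: then $I_s(\nu) < \infty$ almost surely, and the standard energy lower bound for Hausdorff dimension (see \cite{Mattila15}) gives $\dimh(X) \ge \dimh(\spt\nu) \ge s$ almost surely. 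Letting $s \uparrow \min\{d,s_n\}$ along a countable sequence and then $n \to \infty$, using $s_n \to s_0$, yields $\dimh(X) \ge \min\{d,s_0\}$ almost surely.

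To bound the expected energy I would first approximate $I_s(\nu)$ from below by finite sums over pairs of level-$M$ cylinders (using representative points and Fatou's lemma), so that it is enough to control $\mathbb{E}(\mu_n([\iii])\mu_n([\jjj])\,|\Pi(\iii)-\Pi(\jjj)|^{-s})$ for fixed distinct $\iii,\jjj \in \Gamma_n$ and then sum. Writing $\mathtt{w} = \iii \land \jjj$, I condition on the $\sigma$-algebra $\mathcal{R}_{\iii,\jjj}$ of \cref{sec:space}. The point of this conditioning is that the product of densities $\overline{\fii}^{s_n}_\sigma(\iii)\,\overline{\fii}^{s_n}_\sigma(\jjj)$ and the martingale variables $X^\iii, X^\jjj$ are, up to a bounded multiplicative constant, measurable with respect to $\mathcal{R}_{\iii,\jjj}$: the only excluded contraction ratios sit in the finitely many ``gap'' coordinates $\lll_1\kkk_1\cdots\lll_d\kkk_d$, and since all ratios lie in $[\lalpha,\ualpha]$ these contribute at most a factor $\ualpha^{-s_n(p+p')}$, while $X^\iii$ and $X^\jjj$ depend only on contraction ratios strictly below the splitting level and are therefore $\mathcal{R}_{\iii,\jjj}$-measurable. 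Consequently the conditional expectation factorizes, and \cref{prop:transfin} together with the layer-cake formula $r^{-s} = s\int_0^\infty \roo^{-s-1}\mathbf{1}\{r<\roo\}\,d\roo$ gives
\[
  \mathbb{E}\bigl(|\Pi(\iii)-\Pi(\jjj)|^{-s} \bigm| \mathcal{R}_{\iii,\jjj}\bigr) \le s\int_0^\infty \roo^{-s-1}\prod_{k=1}^d\min\Bigl\{1,C\tfrac{\roo}{|\overline{\alpha}^{(k)}_{\mathtt{w}}|}\Bigr\}\,d\roo \lesssim \frac{1}{\psi^s(\mathtt{w})},
\]
where the elementary $\roo$-integral converges exactly because $s < d$ and evaluates, up to a constant, to the reciprocal of the permuted singular value function $\psi^s(\mathtt{w}) = \max_\tau \overline{\fii}^s_\tau(\mathtt{w})$ from \cref{eq:psi-permutation}.

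It remains to sum over the splitting data. Summing the density product over all $\iii,\jjj$ with $\iii\land\jjj = \mathtt{w}$ reconstructs $\sum_{a\ne b}\mu_n([\mathtt{w}a])\mu_n([\mathtt{w}b]) \le \mu_n([\mathtt{w}])^2$, so that, using the independence of $X^{\mathtt{w}}$ from $\mathcal{F}_{|\mathtt{w}|}$, the contribution of prefixes at block-level $\ell$ is controlled by
\[
  \sum_{\mathtt{w}\in\JJ_n^\ell}\mathbb{E}\Bigl(\frac{\mu_n([\mathtt{w}])^2}{\psi^s(\mathtt{w})}\Bigr) \le B\sum_{\mathtt{w}\in\JJ_n^\ell}\mathbb{E}\Bigl(\frac{\overline{\fii}^{s_n}_\sigma(\mathtt{w})^2}{\overline{\fii}^s_\sigma(\mathtt{w})}\Bigr) = B\sum_{\mathtt{w}\in\JJ_n^\ell}\mathbb{E}(\overline{\fii}^{2s_n-s}_\sigma(\mathtt{w})),
\]
where $B$ is the uniform $L^2$-bound on $X^{\mathtt{w}}$ from \cref{thm:measure}, $\psi^s \ge \overline{\fii}^s_\sigma$ for the maximizing permutation $\sigma$ of \cref{eq:approx}, and $\overline{\fii}^{s_n}_\sigma(\mathtt{w})^2/\overline{\fii}^s_\sigma(\mathtt{w}) = \overline{\fii}^{2s_n-s}_\sigma(\mathtt{w})$ since $s, s_n, 2s_n - s$ lie in a common integer window for $s$ near $s_n$. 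By the i.i.d.\ block structure this equals $B(\sum_{\iii\in\JJ_n}\mathbb{E}(\overline{\fii}^{2s_n-s}_\sigma(\iii)))^\ell$, and since $t\mapsto\overline{\fii}^t_\sigma$ is strictly decreasing (as $|\alpha|\le\ualpha<1$) while $\sum_{\iii\in\JJ_n}\mathbb{E}(\overline{\fii}^{s_n}_\sigma(\iii)) = 1$ by the definition of $s_n$ in \cref{eq:approx}, the base is strictly less than $1$ for $s < s_n$. Summing the resulting geometric series over $\ell$ (together with a routine bound for the diagonal pairs lying in a common level-$M$ cylinder) gives $\mathbb{E}(I_s(\nu)) < \infty$.

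The main obstacle I anticipate is precisely the decoupling in the second paragraph: the random measure $\mu_n$ and the projection $\Pi$ are built from the same contraction ratios, so the conditional-expectation factorization must be justified carefully. This is exactly what the $\sigma$-algebra $\mathcal{R}_{\iii,\jjj}$ is engineered for --- it isolates the few coordinates that drive transversality while leaving the measure densities (up to bounded factors) and the subtree martingale limits $X^\iii$ untouched --- but verifying the measurability claims and keeping track of the partial-block common prefixes in the summation requires care. A secondary technical point is confirming that the $\roo$-integral produces $\psi^s$ with the correct permutation and the exponent-window identity, both of which ultimately align the convergence threshold with $\min\{d,s_n\}$.
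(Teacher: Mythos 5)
Your proposal is correct and follows essentially the same route as the paper's proof: the expected $s$-energy of $\Pi_*\mu_n$ (with $\mu_n$ from \cref{thm:measure} and \cref{eq:def-of-meas}) is decomposed over pairs of cylinders by their common prefix, decoupled through the transversality statement \cref{prop:transfin} conditioned on $\mathcal{R}_{\iii,\jjj}$ together with the layer-cake identity, and summed into a convergent geometric series via the defining equation \cref{eq:approx} for $s_n$, with $s_n \to s_0$ and Mattila's energy theorem finishing the argument. The only deviations are organizational: the paper truncates tails at a scale-dependent level $m(b,\roo)$ so that the indicator becomes finitely determined and then removes the martingale factors by conditioning on a level $\sigma$-algebra $\mathcal{F}$ (where their conditional expectation is $1$), whereas you observe that $X^{\iii}$, $X^{\jjj}$ are already $\mathcal{R}_{\iii,\jjj}$-measurable and control them by the uniform $L^2$ bound --- equivalent bookkeeping of the same estimate.
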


\begin{proof}
  Let $s_n$ be the unique solution of \cref{eq:approx}. It suffices to show that for each $n$ we almost surely have $\dimh(X) \ge \min\{d,s_n\}$. Fix $n \in \N$ and let $t < \min\{d,s_n\}$. Writing $\Xi_b=\{\iii\in\Sigma_b : [\iii]\cap\Gamma_n\neq\emptyset\}$, we see that
  \begin{align*}
    \mathbb{E}\biggl( &\iint \frac{1}{|\Pi(\iii)-\Pi(\jjj)|^t} \dd\mu_n(\iii) \dd\mu_n(\jjj) \biggr) \\
    &= \sum_{b=0}^\infty \sum_{\hhh \in \Xi_b} \sum_{i \ne j} \int_0^\infty \mathbb{E}\biggl( \iint_{[\hhh i] \times [\hhh j]} \mathds{1}\{|\Pi(\iii)-\Pi(\jjj)| < \roo^{-1/t}\} \dd\mu_n(\iii) \dd\mu_n(\jjj) \biggr) \dd\roo.
  \end{align*}
  If $(\iii,\jjj) \in [\hhh i] \times [\hhh j]$, then we see that $|\Pi(\iii)-\Pi(\jjj)| < \roo^{-1/t}$ if and only if
  \begin{equation*}
    |\pi^k_\hhh(\sigma^b\iii)-\pi^k_\hhh(\sigma^b\jjj)| < \frac{\roo^{-1/t}}{|\overline{\alpha}_\hhh^{(k)}|}
  \end{equation*}
  for all $k \in \{1,\ldots,d\}$. Let $m=m(b,\roo)$ be such that
  \begin{align*}
    m=\min\{u\geq q_n+1 : \;&\text{$u+b$ is divisible by $q_n$ and } \\
    &\ualpha^{u+b}\max_k\diam(\conv(\pi^k(\Sigma)))\leq\roo^{-1/t}\}.
  \end{align*}
  Then, by \cref{eq:approx2}, for $\iii\in[\hhh]$ with $|\hhh|=b$ we have
  \begin{equation*}
    |\pi_{\hhh}^k(\sigma^b\iii)-\pi_{\hhh}^k(\sigma^b\iii|_m\cdot\ell\ell\cdots)| \leq \ualpha^m\diam(\conv(\pi^k(\Sigma))) \leq \frac{\roo^{-1/t}}{\ualpha^b} \leq \frac{\roo^{-1/t}}{|\overline{\alpha}_{\hhh}^{(k)}|}.
  \end{equation*}
  Since $|\overline{\alpha}_{\hhh}^{(k)}|\leq \ualpha^b$ for all $k$, the last inequality holds uniformly in $b$. Thus, taking $(\iii,\jjj)\in[\hhh i\ppp]\times[\hhh j\qqq]$, where $|\ppp|=|\qqq|=m$ and $i\neq j$, we have
  \begin{equation*}
    \{|\Pi(\iii)-\Pi(\jjj)| < \roo^{-1/t}\}\subseteq\bigcap_{k=1}^d\biggl\{|\pi_{\hhh}^k(i\ppp)-\pi_{\hhh}^k(j\qqq)| < \frac{3\roo^{-1/t}}{|\overline{\alpha}_\hhh^{(k)}|}\biggr\}.
  \end{equation*}
  Hence,
  \begin{align*}
  	\mathbb{E}\biggl( &\iint \frac{1}{|\Pi(\iii)-\Pi(\jjj)|^t} \dd\mu_n(\iii) \dd\mu_n(\jjj) \biggr) \\
  	&= \sum_{b=0}^\infty \sum_{\hhh \in \Xi_b} \sum_{i \ne j} \int_0^\infty \mathbb{E}\biggl( \iint_{[\hhh i] \times [\hhh j]} \mathds{1}\{|\Pi(\iii)-\Pi(\jjj)| < \roo^{-1/t}\} \dd\mu_n(\iii) \dd\mu_n(\jjj) \biggr) \dd\roo,\\
  	&\leq \sum_{b=0}^\infty \sum_{\hhh \in \Xi_b} \sum_{i \ne j} \int_0^\infty\sum_{\ppp,\qqq\in\Sigma_m} \mathbb{E}\biggl( \mathds{1}\bigcap_{k=1}^d\biggl\{|\pi_{\hhh}^k(i\ppp)-\pi_{\hhh}^k(j\qqq)| < \frac{3\roo^{-1/t}}{|\overline{\alpha}_\hhh^{(k)}|}\}\\
  	&\qquad\qquad\qquad\qquad\qquad\qquad\qquad\qquad\qquad\qquad\qquad\mu_n([\hhh i\ppp])\mu_n([\hhh j\qqq]) \biggr) \dd\roo.
  \end{align*}
  By \cref{eq:def-of-meas}, we have
  \begin{equation*}
    \mu_n([\hhh i\ppp])\mu_n([\hhh j\qqq])=\overline{\fii}_\sigma^{s_n}(\hhh i\ppp)\overline{\fii}_\sigma^{s_n}(\hhh j\qqq)X^{\hhh j\qqq}X^{\hhh i\ppp}.
  \end{equation*}
  The random variables $X^{\hhh j\qqq}$ and $X^{\hhh i\ppp}$ are independent, and also independent of the $\sigma$-algebra $\mathcal{F}_{b+m+1}$, since each $X^{\iii}$ depends only on random variables indexed by descendants of $\iii$. Furthermore, the random variable
  \begin{equation*}
    \mathds{1}\bigcap_{k=1}^d\biggl\{|\pi_{\hhh}^k(i\ppp)-\pi_{\hhh}^k(j\qqq)| < \frac{3\roo^{-1/t}}{|\overline{\alpha}_\hhh^{(k)}|}\biggr\}\overline{\fii}_\sigma^{s_n}(\hhh i\ppp)\overline{\fii}_\sigma^{s_n}(\hhh j\qqq)
  \end{equation*}
  is $\mathcal{F}_{b+m+1}$-measurable. Hence,
  \begin{align*}
  	 \mathbb{E}\biggl(&\mathds{1}\bigcap_{k=1}^d\biggl\{|\pi_{\hhh}^k(i\ppp)-\pi_{\hhh}^k(j\qqq)| < \frac{3\roo^{-1/t}}{|\overline{\alpha}_\hhh^{(k)}|}\biggr\}\mu_n([\hhh i\ppp])\mu_n([\hhh j\qqq]) \biggr)\\
  	 &=\mathbb{E}\biggl(\mathbb{E}\biggl( \mathds{1}\bigcap_{k=1}^d\biggl\{|\pi_{\hhh}^k(i\ppp)-\pi_{\hhh}^k(j\qqq)| < \frac{3\roo^{-1/t}}{|\overline{\alpha}_\hhh^{(k)}|}\biggr\}\mu_n([\hhh i\ppp])\mu_n([\hhh j\qqq]) \,\Big|\,\mathcal{F}_{b+m+1}\biggr)\biggl)\\
  	 &=\mathbb{E}\biggl(\mathds{1}\bigcap_{k=1}^d\biggl\{|\pi_{\hhh}^k(i\ppp)-\pi_{\hhh}^k(j\qqq)| < \frac{3\roo^{-1/t}}{|\overline{\alpha}_\hhh^{(k)}|}\biggr\}\overline{\fii}_\sigma^{s_n}(\hhh i\ppp)\overline{\fii}_\sigma^{s_n}(\hhh j\qqq)\mathbb{E}\biggl( X^{\hhh j\qqq}X^{\hhh i\ppp} \,\Big|\,\mathcal{F}_{b+m+1}\biggr)\biggl)\\
  	 &=\mathbb{E}\biggl(\mathds{1}\bigcap_{k=1}^d\biggl\{|\pi_{\hhh}^k(i\ppp)-\pi_{\hhh}^k(j\qqq)| < \frac{3\roo^{-1/t}}{|\overline{\alpha}_\hhh^{(k)}|}\biggr\}\overline{\fii}_\sigma^{s_n}(\hhh i\ppp)\overline{\fii}_\sigma^{s_n}(\hhh j\qqq)\biggl).
  \end{align*}
  Let $u=u(i,j)\in\{1,\ldots,n\}$ be the unique integer such that $\sigma^{u}(\ppp)|_{p+p'}=\lll_1\kkk_1\cdots\lll_d\kkk_d \in \Sigma_{p+p'}$. Set
  \begin{equation*}
    W_{\hhh,i,\ppp}=\overline{\fii}_\sigma^{s_n}(\hhh i\cdot\ppp|_u)\frac{\overline{\fii}_\sigma^{s_n}(\hhh i\ppp)}{\overline{\fii}_\sigma^{s_n}(\hhh i\cdot\ppp|_{u+p+p'})}.
  \end{equation*}
  Since the block $\ppp|_{u+p+p'}$ has length $p+p'$, we have
  \begin{equation*}
  	\lalpha^{s_n(p+p')} W_{\hhh,i,\ppp} \le \overline{\fii}_\sigma^{s_n}(\hhh i\ppp) \le \ualpha^{s_n(p+p')} W_{\hhh,i,\ppp}.
  \end{equation*}
  Since $W_{\hhh,i,\ppp}$ and $\overline{\fii}_\sigma^{s_n}(\hhh j\qqq)$ are $\mathcal{R}_{\hhh,i\ppp}$-measurable, \cref{prop:transfin} gives
  \begin{align*}
    \mathbb{E}\biggl(&\mathds{1}\bigcap_{k=1}^d\biggl\{|\pi_{\hhh}^k(i\cdot\ppp)-\pi_{\hhh}^k(j\cdot\qqq)| < \frac{3\roo^{-1/t}}{|\overline{\alpha}_\hhh^{(k)}|}\biggr\}\overline{\fii}_\sigma^{s_n}(\hhh i\ppp)\overline{\fii}_\sigma^{s_n}(\hhh j\qqq)\biggl)\\
    &=\mathbb{E}\biggl(\mathbb{E}\biggl(\mathds{1}\bigcap_{k=1}^d\biggl\{|\pi_{\hhh}^k(i\ppp)-\pi_{\hhh}^k(j\qqq)| < \frac{3\roo^{-1/t}}{|\overline{\alpha}_\hhh^{(k)}|}\biggr\}\overline{\fii}_\sigma^{s_n}(\hhh i\ppp)\overline{\fii}_\sigma^{s_n}(\hhh j\qqq)\,\Big|\,\mathcal{R}_{\hhh,i\ppp}\biggr)\biggr)\\
    &\leq\Bigl(\frac{\ualpha}{\lalpha}\Bigr)^{s_n(p+p')}\mathbb{E}\biggl(\overline{\fii}_\sigma^{s_n}(\hhh i\ppp)\overline{\fii}_\sigma^{s_n}(\hhh j\qqq)\mathbb{P}\biggl(\bigcap_{k=1}^d\biggl\{|\pi_{\hhh}^k(i\ppp)-\pi_{\hhh}^k(j\qqq)| < \frac{3\roo^{-1/t}}{|\overline{\alpha}_\hhh^{(k)}|}\biggr\}\,\Big|\, \mathcal{R}_{\hhh,i\ppp}\biggr)\biggr)\\
    &\leq\Bigl(\frac{\ualpha}{\lalpha}\Bigr)^{s_n(p+p')}\mathbb{E}\biggl(\overline{\fii}_\sigma^{s_n}(\hhh i\ppp)\overline{\fii}_\sigma^{s_n}(\hhh j\qqq)\prod_{k=1}^d\min\biggl\{1, C\frac{3\roo^{-1/t}}{|\overline{\alpha}_\hhh^{(k)}|}\biggr\}\biggr).
  \end{align*}
  Using this, we get
  \begin{align*}
  	\mathbb{E}\biggl( &\iint \frac{1}{|\Pi(\iii)-\Pi(\jjj)|^t} \dd\mu_n(\iii) \dd\mu_n(\jjj) \biggr) \\
  	&\leq \sum_{b=0}^\infty \sum_{\hhh \in \Xi_b} \sum_{i \ne j} \int_0^\infty\sum_{\ppp,\qqq\in\Sigma_m} \mathbb{E}\biggl( \mathds{1}\bigcap_{k=1}^d\biggl\{|\pi_{\hhh}^k(i\ppp)-\pi_{\hhh}^k(j\qqq)| < \frac{3\roo^{-1/t}}{|\overline{\alpha}_\hhh^{(k)}|}\biggr\}\\
  	&\qquad\qquad\qquad\qquad\qquad\qquad\qquad\qquad\qquad\qquad\quad\mu_n([\hhh i\ppp])\mu_n([\hhh j\qqq]) \biggr) \dd\roo\\
  	&\leq\Bigl(\frac{\ualpha}{\lalpha}\Bigr)^{s_n(p+p')}\sum_{b=0}^\infty \sum_{\hhh \in \Xi_b} \int_0^\infty\sum_{\ppp,\qqq\in\Sigma_m}\mathbb{E}\biggl(\overline{\fii}_\sigma^{s_n}(\hhh i\ppp)\overline{\fii}_\sigma^{s_n}(\hhh j\qqq)\prod_{k=1}^d\min\biggl\{1, C\frac{3\roo^{-1/t}}{|\overline{\alpha}_\hhh^{(k)}|}\biggr\}\biggr)\dd\roo\\
  	&=\Bigl(\frac{\ualpha}{\lalpha}\Bigr)^{s_n(p+p')}\sum_{b=0}^\infty \sum_{\hhh \in \Xi_b} \int_0^\infty\mathbb{E}\biggl(\overline{\fii}_\sigma^{s_n}(\hhh)^2\prod_{k=1}^d\min\biggl\{1, C\frac{3\roo^{-1/t}}{|\overline{\alpha}_\hhh^{(k)}|}\biggr\}\biggr)\dd\roo,
  \end{align*}
  where we used the definition of $s_n$. Since $t<d$, there is a constant $C_t>0$, depending only on $t$ and $d$, such that
  \begin{equation*}
    \int_0^\infty\prod_{k=1}^d\min\biggl\{1, \frac{\roo^{-1/t}}{|\overline{\alpha}_\hhh^{(k)}|}\biggr\}\dd\roo \le C_t\,\psi^t(\hhh)^{-1}.
  \end{equation*}
  Hence,
  \begin{align*}
    \mathbb{E}\biggl( &\iint \frac{1}{|\Pi(\iii)-\Pi(\jjj)|^t} \dd\mu_n(\iii) \dd\mu_n(\jjj) \biggr) \\
  	&\leq\Bigl(\frac{\ualpha}{\lalpha}\Bigr)^{s_n(p+p')}(3C)^d C_t\sum_{b=0}^\infty \sum_{\hhh \in \Xi_b}  \mathbb{E}(\overline{\fii}_\sigma^{s_n}(\hhh)^2\psi^t(\hhh)^{-1})\\
  	&\leq\Bigl(\frac{\ualpha}{\lalpha}\Bigr)^{s_n(p+p')}(3C)^d\sum_{b=0}^\infty \sum_{\hhh \in \Xi_b}  \mathbb{E}(\psi^{s_n}(\hhh)^2\psi^t(\hhh)^{-1})\\
  	&\leq\Bigl(\frac{\ualpha}{\lalpha}\Bigr)^{s_n(p+p')}(3C)^d\sum_{b=0}^\infty \ualpha^{(s_n-t)b}\sum_{\hhh \in \Xi_b}  \mathbb{E}(\psi^{s_n}(\hhh))\\
  	&\leq\Bigl(\frac{\ualpha}{\lalpha}\Bigr)^{s_n(p+p')}(3C)^d\sum_{b=0}^\infty \ualpha^{(s_n-t)b}\sum_{\sigma}\sum_{\hhh \in \Xi_b} \mathbb{E}(\overline\fii^{s_n}_\sigma(\hhh))\\
  	&\leq\Bigl(\frac{\ualpha}{\lalpha}\Bigr)^{s_n(p+p')}(3C)^d\sum_{b=0}^\infty \ualpha^{(s_n-t)b}d!<\infty
  \end{align*}
  which, by \cite[Theorem 8.7]{Mattila1995}, yields $\dimh(X) \ge t$ almost surely as wished.
\end{proof}

\subsection{Positive Lebesgue measure} \label{sec:positive-lebesgue}
The proposition below demonstrates the final claim of Theorem~\ref{thm:main}.

\begin{proposition} \label{thm:positive-lebesgue}
	If $X \subset \R^d$ is a self-affine sponge with random contractions such that $s_0>d$, then we almost surely have $\LL^d(X)>0$.
\end{proposition}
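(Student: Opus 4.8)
The plan is to reuse the random measure $\mu_n$ constructed in \cref{sec:lower-bound} for a suitably large $n$ and to show that the pushforward $\nu = \Pi_*\mu_n$ is almost surely absolutely continuous with density in $L^2(\LL^d)$. Since $\mu_n(\Sigma)>0$ almost surely by \cref{thm:measure} and $\spt\nu \subseteq \Pi(\Sigma) = X$, this forces $\LL^d(X) \ge \LL^d(\spt\nu) > 0$ almost surely. Writing $c_d = \LL^d(B(0,1))$ and $g_r(x) = \nu(B(x,r))/(c_d r^d)$, we have $\int g_r \dd\nu = (c_d r^d)^{-1}\iint \mathds{1}\{|x-y|<r\}\dd\nu(x)\dd\nu(y)$, so by Fatou's lemma the lower $d$-density $\liminf_{r\downarrow0} g_r$ is $\nu$-integrable as soon as $\liminf_{r\downarrow0}\int g_r \dd\nu<\infty$, and a standard density theorem (cf.\ \cite{Mattila1995}) then yields $\nu\ll\LL^d$ with $L^2$ density. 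A further application of Fatou over $(\Omega,\mathcal{F},\mathbb{P})$ shows that it suffices to prove
\begin{equation*}
  \liminf_{r\downarrow0}\frac{1}{r^d}\,\mathbb{E}\biggl(\iint\mathds{1}\{|\Pi(\iii)-\Pi(\jjj)|<r\}\dd\mu_n(\iii)\dd\mu_n(\jjj)\biggr)<\infty.
\end{equation*}

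Since $s_0>d$ and $s_n\to s_0$, and since $s_0>(s_0+d)/2$, we may fix $n$ so large that $s_n>(s_0+d)/2$; then $s_n>d$ and $2s_n-d>s_0$. For $s_n>d$ the singular value function is independent of the permutation, so $\overline{\varphi}^{s_n}_\sigma(\hhh)=\psi^d(\hhh)^{s_n/d}$, where $\psi^d(\hhh)=\prod_{k=1}^d|\overline{\alpha}_\hhh^{(k)}|$. Repeating the computation in the proof of \cref{lem:lb} verbatim, but replacing the kernel $|\Pi(\iii)-\Pi(\jjj)|^{-t}$ together with the $\roo$-integration by the single indicator $\mathds{1}\{|\Pi(\iii)-\Pi(\jjj)|<r\}$ and applying the transversality \cref{prop:transfin} directly at scale $r$, we obtain a constant $C'>0$ such that
\begin{equation*}
  \mathbb{E}\biggl(\iint\mathds{1}\{|\Pi(\iii)-\Pi(\jjj)|<r\}\dd\mu_n\dd\mu_n\biggr)\le C'\sum_{b=0}^\infty\sum_{\hhh\in\Xi_b}\mathbb{E}\biggl(\overline{\varphi}^{s_n}_\sigma(\hhh)^2\prod_{k=1}^d\min\Bigl\{1,\tfrac{3Cr}{|\overline{\alpha}_\hhh^{(k)}|}\Bigr\}\biggr),
\end{equation*}
where, exactly as in \cref{lem:lb}, the cutoff length $m=m(b,r)$ and the sums over the interpolating words $\ppp,\qqq\in\Sigma_m$ collapse to the factor $\overline{\varphi}^{s_n}_\sigma(\hhh)^2$ through the normalization built into the definition of $s_n$.

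It then remains to bound the right-hand side uniformly in $r$. Using $\min\{1,x\}\le x$ gives $\prod_{k=1}^d\min\{1,3Cr/|\overline{\alpha}_\hhh^{(k)}|\}\le(3C)^dr^d\psi^d(\hhh)^{-1}$, so the summand is at most $(3C)^dr^d\,\overline{\varphi}^{s_n}_\sigma(\hhh)^2\psi^d(\hhh)^{-1}=(3C)^dr^d\,\psi^d(\hhh)^{(2s_n-d)/d}$. Since $2s_n-d>d$, we have $\psi^d(\hhh)^{(2s_n-d)/d}=\overline{\varphi}^{2s_n-d}(\hhh)$, and the independence \ref{it:ass1} and identical distribution \ref{it:ass1b} yield
\begin{equation*}
  \sum_{\hhh\in\Xi_b}\mathbb{E}\bigl(\psi^d(\hhh)^{(2s_n-d)/d}\bigr)\le\sum_{\hhh\in\Sigma_b}\mathbb{E}\bigl(\overline{\varphi}^{2s_n-d}(\hhh)\bigr)=\Bigl(\sum_{i=1}^N\mathbb{E}(\varphi^{2s_n-d}(i))\Bigr)^b=:\theta^b.
\end{equation*}
Because $s\mapsto\sum_{i=1}^N\mathbb{E}(\varphi^s(i))=\sum_{i=1}^N\mathbb{E}(|\det A_i|^{s/d})$ is strictly decreasing and equals $1$ at $s=s_0$, the choice $2s_n-d>s_0$ gives $\theta<1$. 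Dividing by $r^d$ removes the $r$-dependence completely, so that
\begin{equation*}
  \frac{1}{r^d}\,\mathbb{E}\biggl(\iint\mathds{1}\{|\Pi(\iii)-\Pi(\jjj)|<r\}\dd\mu_n\dd\mu_n\biggr)\le C'(3C)^d\sum_{b=0}^\infty\theta^b<\infty
\end{equation*}
uniformly in $r$, which is stronger than the required $\liminf$ bound.

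The step requiring the most care is the transfer of the bookkeeping from \cref{lem:lb}: we must verify that replacing the energy kernel and its $\roo$-integral by the bare indicator at scale $r$ leaves intact the collapse of the sums over $\ppp,\qqq$, and that the scale $m(b,r)$ enters only through the approximation \cref{eq:approx2}, with the transversality estimate and the independence of the martingale limits $X^\iii$ from $\mathcal{F}_{n+m+1}$ used precisely as before. The only genuinely new ingredient is the elementary bound $\min\{1,x\}\le x$, which trades the $\roo$-integral $\psi^t(\hhh)^{-1}$ appearing in \cref{lem:lb} for the pointwise factor $r^d\psi^d(\hhh)^{-1}$ and thereby shifts the controlling exponent from $s_n-t$ to $(2s_n-d)/d$; the hypothesis $s_0>d$ is exactly what makes $2s_n-d>s_0$, and hence $\theta<1$.
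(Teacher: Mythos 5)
Your proposal is correct and follows essentially the same route as the paper's own proof: the same random measure $\mu_n$ with $s_n>d$, the same reduction of $\LL^d(X)>0$ to almost sure absolute continuity of $\Pi_*\mu_n$ via Fatou and the expected lower-density integral, and the same verbatim reuse of the bookkeeping from \cref{lem:lb} with the indicator at scale $r$, the transversality estimate of \cref{prop:transfin}, and the elementary bound $\min\{1,x\}\le x$ extracting the factor $r^d$. The only deviation is the final summation, where you choose $n$ with $2s_n-d>s_0$ to obtain the geometric ratio $\theta=\sum_{i=1}^N\mathbb{E}(\varphi^{2s_n-d}(i))<1$, while the paper needs only $s_n>d$, bounding $\overline{\fii}^{s_n}_\sigma(\hhh)^2\,\overline{\fii}^{d}_\sigma(\hhh)^{-1}\le\ualpha^{(s_n-d)b}\,\overline{\fii}^{s_n}_\sigma(\hhh)$ and invoking the normalization \cref{eq:approx}; both variants yield the required convergent series.
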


\begin{proof}
  Let $s_n$ be the unique solution of \cref{eq:approx}. Since $s_n$ converges to $s_0$, we may fix $n \in \N$ such that $s_n>d$. Let $\mu_n$ be the random measure defined in \cref{eq:def-of-meas}. It is enough to show that $\Pi_*\mu_n\ll\LL^d$ almost surely. To that end, let
  \begin{equation*}
    \underline{D}(\Pi_*\mu_n,x)=\liminf_{r\to0}\frac{\Pi_*\mu_n(B(x,r))}{\LL^d(B(x,r))}
  \end{equation*}
  be the lower local density of $\Pi_*\mu_n$ at $x$. By \cite[Theorem 2.12(iii)]{Mattila1995}, it suffices to show that
  \begin{equation*}
    \mathbb{E}\biggl(\int\underline{D}(\Pi_*\mu_n,x)\dd\Pi_*\mu_n(x)\biggr)<\infty.
  \end{equation*}
 	Writing $\Xi_b=\{\iii\in\Sigma_b : [\iii]\cap\Gamma_n\neq\emptyset\}$, we see by Fatou's lemma that
	\begin{align*}
	 	\mathbb{E}\biggl(&\int\underline{D}(\Pi_*\mu_n,x)\dd\Pi_*\mu_n(x)\biggr)\\
	 	&\leq\liminf_{r\to0}\frac{1}{r^d\LL^d(B(0,1))}\sum_{b=0}^\infty \sum_{\hhh \in \Xi_b} \sum_{i \ne j}\mathbb{E}\biggl(\iint_{[\hhh i] \times [\hhh j]}\mathds{1}\{|\Pi(\iii)-\Pi(\jjj)|<r\}\\
    &\qquad\qquad\qquad\qquad\qquad\qquad\qquad\qquad\qquad\qquad\qquad\qquad\quad\dd\mu_n(\iii)\dd\mu_n(\jjj)\biggr)
	\end{align*}
  If $(\iii,\jjj) \in [\hhh i] \times [\hhh j]$, then, as in the proof of \cref{prop:transfin}, we see that $|\Pi(\iii)-\Pi(\jjj)| < r$ if and only if
  \begin{equation*}
 	  |\pi^k_\hhh(\sigma^b\iii)-\pi^k_\hhh(\sigma^b\jjj)| < \frac{r}{|\overline{\alpha}_\hhh^{(k)}|}
  \end{equation*}
  for all $k \in \{1,\ldots,d\}$. Let $m=m(b,r)$ be such that
  \begin{align*}
 	  m=\min\{u\geq q_n+1 : \;&\text{$u+b$ is divisible by $q_n$ and } \\
 	  &\ualpha^{u+b}\max_k\diam(\conv(\pi^k(\Sigma)))\leq r\}.
  \end{align*}
  Then, by \cref{eq:approx2}, for $\iii\in[\hhh]$ with $|\hhh|=b$ we have
  \begin{equation*}
 	  |\pi_{\hhh}^k(\sigma^b\iii)-\pi_{\hhh}^k(\sigma^b\iii|_m\cdot\ell\ell\cdots)| \leq \frac{r}{\ualpha^b} \leq \frac{r}{|\overline{\alpha}_\hhh^{(k)}|}.
  \end{equation*}
  As above, this uses $|\overline{\alpha}_\hhh^{(k)}|\leq \ualpha^b$ for all $k$. Thus, taking $(\iii,\jjj)\in[\hhh i\ppp]\times[\hhh j\qqq]$, where $|\ppp|=|\qqq|=m$ and $i\neq j$, we have
  \begin{equation*}
 	  \{|\Pi(\iii)-\Pi(\jjj)| < r\}\subseteq\bigcap_{k=1}^d\biggl\{|\pi_{\hhh}^k(i\ppp)-\pi_{\hhh}^k(j\qqq)| < \frac{3r}{|\overline{\alpha}_\hhh^{(k)}|}\biggr\}.
  \end{equation*}
  Hence,
  \begin{align*}
	  \liminf_{r\to0}&\frac{1}{r^d\LL^d(B(0,1))}\sum_{b=0}^\infty \sum_{\hhh \in \Xi_b} \sum_{i \ne j}\\
    &\qquad\quad\mathbb{E}\biggl(\iint_{[\hhh i] \times [\hhh j]}\mathds{1}\{|\Pi(\iii)-\Pi(\jjj)|<r\}\dd\mu_n(\iii)\dd\mu_n(\jjj)\biggr)\\
	  &\leq\liminf_{r\to0}\frac{1}{r^d\LL^d(B(0,1))}\sum_{b=0}^\infty \sum_{\hhh \in \Xi_b} \sum_{i \ne j}\sum_{\ppp,\qqq\in\Sigma_m}\\
    &\qquad\qquad\qquad\quad\;\mathbb{E}\biggl(\mathds{1}\bigcap_{k=1}^d\biggl\{|\pi_{\hhh}^k(i\ppp)-\pi_{\hhh}^k(j\qqq)| < \frac{3r}{|\overline{\alpha}_\hhh^{(k)}|}\biggr\}\mu_n([\hhh i\ppp])\mu_n([\hhh j\qqq])\biggr).
  \end{align*}
  Proceeding as in the proof of \cref{lem:lb}, conditioning on $\mathcal{F}_{b+m+1}$ and using the independence of the variables $X^{\iii}$ (each $X^{\iii}$ depends only on descendants of $\iii$), we get
  \begin{align*}
	  \mathbb{E}\biggl(&\mathds{1}\bigcap_{k=1}^d\biggl\{|\pi_{\hhh}^k(i\ppp)-\pi_{\hhh}^k(j\qqq)| < \frac{3r}{|\overline{\alpha}_\hhh^{(k)}|}\biggr\}\mu_n([\hhh i\ppp])\mu_n([\hhh j\qqq])\biggr)\\
	  &=\mathbb{E}\biggl(\mathds{1}\bigcap_{k=1}^d\biggl\{|\pi_{\hhh}^k(i\ppp)-\pi_{\hhh}^k(j\qqq)| < \frac{3r}{|\overline{\alpha}_\hhh^{(k)}|}\biggr\}\overline{\fii}_\sigma^{s_n}(\hhh i\ppp)\overline{\fii}_\sigma^{s_n}(\hhh j\qqq)\biggr)\\
	  &\leq\Bigl(\frac{\ualpha}{\lalpha}\Bigr)^{s_n(p+p')}\mathbb{E}\biggl(\overline{\fii}_\sigma^{s_n}(\hhh i\ppp)\overline{\fii}_\sigma^{s_n}(\hhh j\qqq)\prod_{k=1}^d\min\biggl\{1, C\frac{3r}{|\overline{\alpha}_\hhh^{(k)}|}\biggr\}\biggr)\\
	  &\leq C^d3^dr^d\Bigl(\frac{\ualpha}{\lalpha}\Bigr)^{s_n(p+p')}\mathbb{E}(\overline{\fii}_\sigma^{s_n}(\hhh i\ppp)\overline{\fii}_\sigma^{s_n}(\hhh j\qqq)\overline{\fii}_\sigma^{d}(\hhh)^{-1}),
  \end{align*}
  where we note that since $s_n>d$, the random function $\overline{\fii}_\sigma^{s_n}$ is independent of the choice of $\sigma$ and $\overline{\fii}_\sigma^{s_n}(\hhh)=\bigl|\prod_{i=1}^d\overline{\alpha}_\hhh^{(i)}\bigr|^{s_n/d}$. Therefore, we have
  \begin{align*}
	  \mathbb{E}\biggl(&\int\underline{D}(\Pi_*\mu_n,x)\dd\Pi_*\mu_n(x)\biggr)\\
	  &\leq\Bigl(\frac{\ualpha}{\lalpha}\Bigr)^{s_n(p+p')}\frac{C^d3^d}{\LL^d(B(0,1))}\sum_{b=0}^\infty \sum_{\hhh \in \Xi_b} \sum_{i \ne j}\sum_{\ppp,\qqq\in\Sigma_m}\mathbb{E}(\overline{\fii}_\sigma^{s_n}(\hhh i\ppp)\overline{\fii}_\sigma^{s_n}(\hhh j\qqq)\overline{\fii}_\sigma^{d}(\hhh)^{-1})\\
	  &\leq\Bigl(\frac{\ualpha}{\lalpha}\Bigr)^{s_n(p+p')}\frac{C^d3^d}{\LL^d(B(0,1))}\sum_{b=0}^\infty \sum_{\hhh \in \Xi_b} \mathbb{E}(\overline{\fii}_\sigma^{s_n}(\hhh)^2\overline{\fii}_\sigma^{d}(\hhh)^{-1})\\
	  &\leq\Bigl(\frac{\ualpha}{\lalpha}\Bigr)^{s_n(p+p')}\frac{C^d3^d}{\LL^d(B(0,1))}\sum_{b=0}^\infty\ualpha^{(s_n-d)b} \sum_{\hhh \in \Xi_b} \mathbb{E}(\overline{\fii}_\sigma^{s_n}(\hhh))\\
	  &=\Bigl(\frac{\ualpha}{\lalpha}\Bigr)^{s_n(p+p')}\frac{C^d3^d}{\LL^d(B(0,1))}\sum_{b=0}^\infty\ualpha^{(s_n-d)b}<\infty
  \end{align*}
  as wished.
\end{proof}

\noindent
Combining Propositions~\ref{lem:ub}, \ref{lem:lb}, and \ref{thm:positive-lebesgue} completes the proof of Theorem~\ref{thm:main}.

\bibliographystyle{abbrv}
\bibliography{Bibliography}

\end{document}